\theoremstyle{plain}
\newtheorem{theorem}{Theorem}[section]
\newtheorem{lemma}[theorem]{Lemma}
\newtheorem{proposition}[theorem]{Proposition}
\theoremstyle{definition}
\newtheorem{definition}[theorem]{Definition}
\journal{Linear Algebra and its Applications}
\begin{document}

\title{\textbf{Two spectral extremal results for graphs with given order and rank}}
\author{
	\small Xiuqing Li,\ \ Xian'an Jin\footnote{Corresponding author},\ \ Chao Shi,\ \ Ruiling Zheng\\[0.2cm]
	\small School of Mathematical Sciences, Xiamen University,\\
	\small Xiamen, Fujian 361005, P. R. China\\[0.2cm]
	\small E-mails: xiuqingli2021@163.com; xajin@xmu.edu.cn;\\
	                \quad \quad \ cshi@aliyun.com; rlzheng2017@163.com}
\date{}

\begin{abstract}
The spectral radius and rank of a graph are defined to be the spectral radius and rank of its adjacency matrix, respectively. It is an important problem in spectral extremal graph theory to determine the extremal graph that has the maximum or minimum spectral radius over certain families of graphs. Monsalve and Rada [Extremal spectral radius of graphs with rank 4, Linear Algebra Appl. 609 (2021) 1–11] obtained the extremal graphs with maximum and minimum spectral radii among all graphs with order $n$ and rank $4$. In this paper, we first determine the extremal graph which attains the maximum spectral radius among all graphs with any given order $n$ and rank $r$, and further determine the extremal graph which attains the minimum spectral radius among all graphs with order $n$ and rank $5$.

\vskip0.2cm

\noindent{\bf Keywords:} Rank of graphs; Extremal graphs; Maximum spectral radius; Minimum spectral radius

\vskip0.2cm

\end{abstract}

\maketitle

\section{Introduction}
Graphs considered in the paper are all simple, connected and undirected. Let $G=(V(G),E(G))$ be a graph. For $v \in V(G)$, the degree $d(v)$ is the cardinality of the neighborhood $N_{G}(v)$ (or $N(v)$ for short) of $v$ in $G$. Let $A(G)$ be the adjacency matrix of $G$. The characteristic polynomial of a graph $G$ is the determinantal expansion of $xI-A(G)$, denoted by $\phi(G,x)$. According to the famous Perron-Frobenius theorem, the largest eigenvalue $\rho(G)$ of $A(G)$ is exactly the spectral radius of $G$ and there is a unique positive unit eigenvector corresponding to $\rho(G)$, called the principal eigenvector of $G$.

Let $G$ be a graph with vertex set $V(G)=\{v_{1},v_{2},\dots,v_{k}\}$ and $\textbf{m}=(n_{1},n_{2},\dots,n_{k})$ be a vector of positive integers. Denote by $G \circ \textbf{m}$, the graph obtained from $G$ by replacing each vertex $v_{i}$ with an independent set $V_{i}$ with $n_{i}$ vertices $v_{i}^{1}, v_{i}^{2}, \dots, v_{i}^{n_{i}}$ and joining each vertex in $V_{i}$ with each vertex in $V_{j}$ if and only if $v_{i}v_{j} \in E(G)$. The resulting graph $G \circ \textbf{m}$ is said to be obtained from $G$ by multiplication of vertices by Chang, Huang and Yeh in \cite{01}. Further, let $G$ be a graph of order $k$, we define $M_{n}(G)$ to be the set of all graphs $G \circ (n_{1},n_{2},\dots,n_{k})$ with $\sum_{i=1}^{k}n_{i}=n$. Moreover, for a given set of graphs $\{H_{1},\dots,H_{l}\}$, we denote the set $\bigcup_{i=1}^{l}M_{n}(H_{i})$ by $M_{n}(H_{1},\dots,H_{l})$.

Let $G$ be a connected graph of order $n$ and $R(G)$ be its rank. Sciriha \cite{04} proved that $R(G)=i$ if and only if $G \in M_{n}(K_{i})$ for $i=2,3$, where $K_i$ is the complete graph of order $i$. Chang, Huang and Yeh \cite{01,05} characterized the set of all connected graphs with rank 4 and 5, respectively. They obtained the set of connected graphs of order $n$ and rank 5 is $$M_{n}(G_{1}, G_{2}, \dots, G_{24}),$$
where the graphs $G_{1}, G_{2}, \dots, G_{24}$ are shown in Figure \ref{Fig.1.1}.

\begin{figure}[H]
	\centering
	\includegraphics[width=12cm]{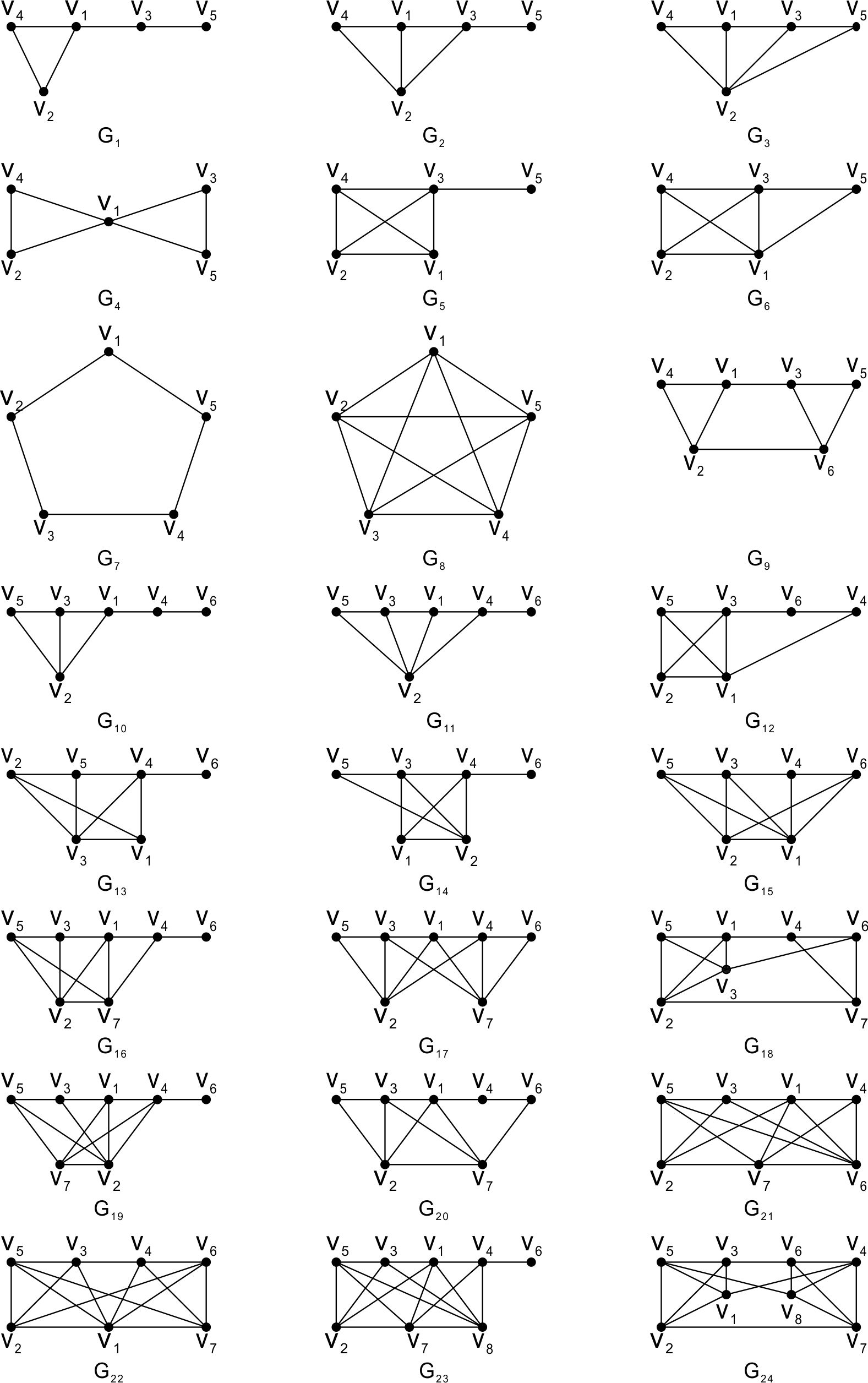}
	\caption{Reduced graphs of rank 5.}
	\label{Fig.1.1}
\end{figure}

For a given class of graphs $\mathscr{G}$, there are many results on characterizing the extramal graphs with maximum and minimum spectral radius among $M_{n}(\mathscr{G})$. For example, in \cite{06}, Stevanovi\'{c}, Gutman and Rehman determined the extremal graphs with the maximum and minimum spectral radii in $M_{n}(K_{p})$. Monsalve and Rada \cite{07} obtained the extremal graphs with maximum and minimum spectral radii among all connected graphs of order $n$ and rank $4$. In the same article, they conjectured that in $M_{n}(P_{k})$, $P_{k}\circ (1,\dots,1,\lfloor \frac{n-k+2}{2} \rfloor,\lceil \frac{n-k+2}{2} \rceil,1,\dots,1)$ and $P_{k}\circ (\lfloor \frac{n-k+2}{2} \rfloor,1,\dots,1,$ $\lceil \frac{n-k+2}{2} \rceil)$ attain the maximum and minimum spectral radius, respectively, and $C_{k}\circ (\lfloor \frac{n-k+2}{2} \rfloor,\lceil \frac{n-k+2}{2} \rceil,1,\dots,1)$ attains the maximum spectral radius in $M_{n}(C_{k})$. Recently, Lou, Zhai \cite{02} and Sun, Das \cite{03} independently proved the above conjectures on the extremal graphs with the maximum spectral radius in $M_{n}(P_{k})$ and $M_{n}(C_{k})$ by using different techniques, and they independently constructed a class of graphs disproving the conjecture on the minimum spectral radius in $M_{n}(P_{k})$.

The Tur\'{a}n graph $T(n,r)$ is the complete $r$-partite graph on $n$ vertices where its part sizes are as equal as possible. In this paper, we first determine the extremal graph that attains the maximum spectral radius with any given order and rank, and obtain:

\begin{theorem}\label{1}
$T(n,r)$ is the unique extremal graph that attains the maximum spectral radius among all graphs of order $n$ and rank $r$.
\end{theorem}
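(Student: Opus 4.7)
The plan is to reduce Theorem~\ref{1} to Nikiforov's spectral analogue of Tur\'an's theorem via two short structural observations. First, I would verify that $T(n,r)$ is itself in the competition: since $T(n,r)$ is the complete $r$-partite graph $K_{r}\circ(n_{1},\ldots,n_{r})$ with each $n_{i}\ge 1$, and since the operation of vertex multiplication merely duplicates rows and columns of the adjacency matrix, we have $R(T(n,r))=R(K_{r})=r$. Hence $T(n,r)\in M_{n}(K_{r})$ has rank exactly $r$ and belongs to the extremal class.

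Next, I would establish the clique--rank inequality $\omega(G)\le R(G)$ for every graph $G$. Indeed, a clique on $\omega$ vertices induces the principal submatrix $J_{\omega}-I_{\omega}$ of $A(G)$, which has full rank $\omega$; since the rank of any principal submatrix of a symmetric matrix cannot exceed the rank of the whole matrix, we obtain $\omega\le R(G)$. Applied to a rank-$r$ candidate $G$, this yields $\omega(G)\le r$, i.e., $G$ is $K_{r+1}$-free.

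With these two points in hand, the theorem follows at once from Nikiforov's spectral Tur\'an theorem, which asserts that $T(n,r)$ is the unique graph of maximum spectral radius among all $K_{r+1}$-free graphs on $n$ vertices. Since the class of rank-$r$ graphs of order $n$ is contained in the class of $K_{r+1}$-free graphs of the same order, and $T(n,r)$ lies in both, the extremality (and uniqueness) transfers verbatim to the smaller class of rank-$r$ graphs and Theorem~\ref{1} drops out.

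The main obstacle is at the level of citation rather than argument: the two rank-theoretic ingredients $R(T(n,r))=r$ and $\omega\le R$ are essentially one-line facts, so how short the proof ends up being depends entirely on whether one invokes Nikiforov's spectral Tur\'an theorem as a black box or wants a self-contained derivation (in which case the real work is re-proving spectral Tur\'an). I would choose the black-box route, since no finer feature of rank-$r$ graphs beyond $\omega\le r$ is needed: the proof plan above is self-contained modulo that single classical citation.
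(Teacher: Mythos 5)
Your proposal is correct and follows essentially the same route as the paper: both establish that a rank-$r$ graph is $K_{r+1}$-free by observing that an $(r+1)$-clique would yield the full-rank principal submatrix $J_{r+1}-I_{r+1}$, both note $R(T(n,r))=R(K_r)=r$ via vertex multiplication, and both then invoke Nikiforov's spectral Tur\'an theorem (the paper's Theorem~\ref{2.2}) as a black box. No gaps.
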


However, it seems that it is a difficult task to find the extremal graph that attains the minimum spectral radius with given order and rank. In this paper, we focus on graphs with order $n$ and rank $5$, and obtain:

\begin{theorem}\label{2}
The extremal graph that attains the minimum spectral radius among all connected graphs of order $n$ and rank 5 is:
	\begin{itemize}
		\item $G_{7}=C_{5}$, for $n=5$;
		\item $G_{1} \circ(1,1,1,1,n-4)$, for $6\leq n \leq 10$;
		\item $G_{10} \circ(1,1,1,1,1,n-5)$, for $n=11$;
		\item $G_{10} \circ (1,1,1,1,k,n-k-4)$, where $k=\lfloor \frac{6n-37-\sqrt{24n+1}}{18} \rfloor$ or $\lceil \frac{6n-37-\sqrt{24n+1}}{18} \rceil$, for $n \geq 12$.
	\end{itemize}
\end{theorem}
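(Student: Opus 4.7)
The plan is to use the classification of Chang, Huang and Yeh stated in the introduction: every connected graph of order $n$ and rank $5$ belongs to $M_n(G_i)$ for some $i\in\{1,\dots,24\}$. For each $i$ I would minimise $\rho$ over $M_n(G_i)$ separately, and then rank the resulting $24$ minima against one another as a function of $n$.

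First I would convert the problem into a finite-dimensional optimisation. If $G_i$ has $k_i$ vertices and $\mathbf{m}=(n_1,\dots,n_{k_i})$ is a multiplicity vector with $n_j\ge 1$ and $\sum_j n_j=n$, then by a standard quotient-matrix reduction (the principal eigenvector of a blow-up is constant on each independent set $V_j$) one has $\rho(G_i\circ\mathbf{m})=\rho(B_i(\mathbf{m}))$, where $B_i(\mathbf{m})$ is the $k_i\times k_i$ nonnegative matrix whose $(j,\ell)$-entry equals $n_\ell$ if $v_jv_\ell\in E(G_i)$ and $0$ otherwise. I would then exploit a twin-containment principle: if $N_{G_i}(u)\subseteq N_{G_i}(v)$, shifting one unit of mass from $v$ to $u$ does not increase $\rho(B_i(\mathbf{m}))$. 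Combined with analogous monotonicity arguments along specific edges, this forces every minimiser to concentrate its surplus weight on a very small set of low-influence vertices of $G_i$, typically one minimum-degree vertex, or a pair of adjacent minimum-degree vertices.

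With the candidate distributions thus pinned down, I would compute, for each $G_i$, an explicit characteristic polynomial whose largest root equals $\min_{\mathbf{m}}\rho(B_i(\mathbf{m}))$. The tournament is won by $G_1$ for small $n$ and by $G_{10}$ for large $n$. For $G_{10}$ the optimal structure places mass only on two adjacent minimum-degree vertices, giving the family $G_{10}\circ(1,1,1,1,k,n-k-4)$, whose characteristic polynomial is a low-degree polynomial in $x$ with coefficients that are explicit polynomials in $k$ and $n$. Minimising its largest root in the continuous variable $k$ by implicit differentiation yields the real optimum $k^{*}=\tfrac{6n-37-\sqrt{24n+1}}{18}$, and convexity of the Perron root in $k$ then pins the integer optimum to one of $\lfloor k^{*}\rfloor,\lceil k^{*}\rceil$; at $n=11$ the relaxation gives $k^{*}<1$, which is why that case is clamped to $k=1$ and stated separately.

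The main obstacle is the combinatorial bookkeeping forced by the $24$-graph case analysis, and, more delicately, the exact pairwise comparisons at the transition points $n\in\{10,11,12\}$, where $G_1$ loses its lead to $G_{10}$. In this window, loose eigenvalue estimates are not sharp enough; one must instead compare polynomials of degrees $5$ and $6$ whose largest roots differ only slightly, which I expect to handle by subtracting the two characteristic polynomials, evaluating the difference at one of the Perron roots in question, and tracking the sign of the resulting expression. Organising the remaining $22$ reduced graphs efficiently, ideally by grouping them according to common containment or common degree structure so that a handful of model computations dispose of many cases at once, is the key organisational challenge.
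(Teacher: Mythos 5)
Your skeleton matches the paper's: reduce to the $24$ reduced graphs $G_1,\dots,G_{24}$, pass to the quotient matrix of the equitable partition, use neighborhood-containment mass-shifting and characteristic-polynomial comparisons to pin down the minimizer in each family, and finish with a continuous relaxation in $k$. But the two places where the actual work lives are exactly the places your plan replaces with unproved assertions. First, the final optimization over $k$ in the family $F_n(k)=G_{10}\circ(1,1,1,1,k,n-k-4)$: you invoke ``convexity of the Perron root in $k$'' to jump from the continuous optimum to $\lfloor k^*\rfloor$ or $\lceil k^*\rceil$, but that convexity is neither obvious nor established anywhere, and without it the conclusion does not follow. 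The paper does not prove convexity; it proves unimodality directly (Lemma \ref{3.13}) by extracting from the eigenvalue equations the identity $(\rho_i^{2}-n+i+3)(\rho_i^{2}-\rho_i-2i-2)=2$, sandwiching $\rho_i$ between $\sqrt{n-i-2}$ and $(1+\sqrt{8i+17})/2$ on each side of $\alpha$, deducing the sign of $x_6^i-x_5^i$, and feeding that into a Rayleigh-quotient comparison of consecutive $\rho_i$. Note also that $\alpha=\frac{6n-37-\sqrt{24n+1}}{18}$ arises there as the crossing point of those two bounds, not as a stationary point of an implicit differentiation; if you insist on the calculus route you must verify the two quantities coincide.

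Second, the cross-family comparisons are harder than ``subtract the two characteristic polynomials.'' The paper uses that trick only within a fixed reduced graph; to show the $G_{10}$-family beats the $G_1$-family for $n\ge 12$ it needs an edge-rotation argument (Theorem \ref{3.10}) applied to the principal eigenvector, preceded by a separate eigenvector-component lemma (Lemma \ref{3.9}) showing the $G_1$-minimizer has $k\ge 2$, with $12\le n\le 18$ checked numerically. Your proposal gives no reason to expect the polynomial-difference sign analysis to close these comparisons. Two further corrections: your heuristic that the surplus mass concentrates on ``one minimum-degree vertex or a pair of adjacent minimum-degree vertices'' fails in detail --- for $G_{10}$ the two weighted classes $V_5,V_6$ are non-adjacent and of different degrees, and for $G_7=C_5$ the minimizer is $G_7\circ(\lceil\frac{n-3}{2}\rceil,1,\lfloor\frac{n-3}{2}\rfloor,1,1)$, which splits the mass over two non-adjacent vertices --- and running $24$ separate minimizations is avoidable: the paper first cuts the list to $\{G_1,G_7,G_{10}\}$ by exhibiting suitable blow-ups of $G_1$ and $G_{10}$ as spanning proper subgraphs of the other reduced graphs, so that Theorem \ref{3.2} disposes of $21$ families before any optimization is done. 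Your instinct to group by containment is the right one, but it needs to be executed at that level to keep the case analysis tractable.
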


\section{The proof of Theorem \ref{1}}

We will use the following results to prove Theorem \ref{1}.

\begin{theorem}\cite{01} \label{2.1}
	Suppose that $G$ and $H$ are two graphs. If $H \in M_{n}(G)$, then $R(H)=R(G)$.
\end{theorem}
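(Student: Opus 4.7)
The plan is to exploit the block structure of the adjacency matrix of $H = G \circ (n_1,\ldots,n_k)$ directly. Order the vertices of $H$ so that the vertices of each independent set $V_i$ appear consecutively. Then $A(H)$ decomposes into $k \times k$ blocks where the $(i,j)$ block equals the all-ones matrix $J_{n_i \times n_j}$ when $v_i v_j \in E(G)$ and the zero block otherwise, with all diagonal blocks zero since each $V_i$ is independent.

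Two vertices of the same class $V_i$ share the same neighborhood in $H$, so their rows in $A(H)$ are identical. My first step will therefore be a row reduction: within each block row $i$, subtract the first row from the remaining $n_i-1$ rows, zeroing them out. This preserves the rank and leaves only $k$ nonzero rows, one per class, such that the surviving row for $V_i$ has a $1$ in every column belonging to $V_j$ with $v_iv_j \in E(G)$ and $0$ elsewhere.

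The second step is the column-analog. After the row reduction, within each column block $V_j$ the $n_j$ columns agree on all $k$ surviving rows, so subtracting the first column of each block from the others zeros out $n_j-1$ columns per block, again preserving the rank. What is left is a $k\times k$ matrix whose $(i,j)$ entry is $1$ iff $v_iv_j\in E(G)$, i.e.\ exactly $A(G)$. Hence $R(H)=\mathrm{rank}\,A(H)=\mathrm{rank}\,A(G)=R(G)$.

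The same computation can be packaged cleanly as a factorization: letting $S$ be the $n\times k$ indicator matrix with $S_{v,i}=1$ iff $v\in V_i$, one checks $A(H)=S\,A(G)\,S^{T}$, and since $S$ has full column rank $k$ (every $n_i\ge 1$), rank is preserved. There is no substantive obstacle; the only point needing care is verifying that the diagonal blocks of $A(H)$ really are zero, ensuring that the reduced $(i,i)$ entry matches $A(G)_{ii}=0$, and keeping track of the fact that elementary row and column operations both leave the rank unchanged.
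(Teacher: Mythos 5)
Your argument is correct. Note that the paper does not prove this statement at all: it is quoted as a known result from Chang, Huang and Yeh \cite{01}, so there is no in-paper proof to compare against. Your two renditions of the argument (elementary row/column reduction exploiting that vertices in the same class $V_i$ have identical rows, and the factorization $A(H)=S\,A(G)\,S^{T}$ with $S$ the $n\times k$ class-indicator matrix of full column rank) are both sound and amount to the standard proof of this fact; the factorization version is the cleaner one, since $S$ having full column rank gives $\operatorname{rank}(S\,A(G)\,S^{T})=\operatorname{rank}(A(G))$ immediately (e.g.\ via $A(G)=(S^{T}S)^{-1}S^{T}A(H)S(S^{T}S)^{-1}$). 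Your point of care about the diagonal blocks being zero, so that the reduced $(i,i)$ entry matches $A(G)_{ii}=0$, is exactly the right thing to check, and it holds because each $V_i$ is an independent set by the definition of $G\circ\mathbf{m}$.
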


\begin{theorem}\cite{08} \label{2.2}
	Let $T(n,r)$ be the $r$-partite Tur\'{a}n graph of order n. If $G$ is a $K_{r+1}$-free graph of order $n$, then $\rho(G)<\rho(T(n,r))$ unless $G=T(n,r)$.
\end{theorem}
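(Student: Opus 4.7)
The plan is to combine a spectral symmetrization with a structure-extraction argument to force any extremal graph to coincide with $T(n,r)$. Let $G$ be a $K_{r+1}$-free graph on $n$ vertices attaining the maximum spectral radius, and let $\mathbf{x}>0$ be its Perron eigenvector normalized to $\|\mathbf{x}\|_2=1$, with eigenvalue $\rho=\rho(G)$.

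First, I would show that every non-adjacent pair $u,v\in V(G)$ must satisfy $x_u=x_v$. Assume for contradiction $x_u>x_v$ and let $G'$ be obtained from $G$ by replacing $N_G(v)$ with $N_G(u)$ (so $v$ becomes a twin of $u$). A direct Rayleigh-quotient computation yields
\begin{align*}
\mathbf{x}^\top A(G')\mathbf{x}-\mathbf{x}^\top A(G)\mathbf{x}
&=2x_v\!\left(\sum_{w\in N(u)}x_w-\sum_{w\in N(v)}x_w\right)\\
&=2\rho\, x_v(x_u-x_v)>0,
\end{align*}
so $\rho(G')>\rho(G)$, contradicting extremality, provided $G'$ remains $K_{r+1}$-free. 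The latter holds because $u\notin N_G(u)=N_{G'}(v)$ keeps $u,v$ non-adjacent in $G'$, and any $(r+1)$-clique of $G'$ through $v$ would, under the substitution $v\mapsto u$, transport to an $(r+1)$-clique of $G$. Next I would partition $V(G)=V_1\cup\cdots\cup V_s$ by Perron value, observe that every cross-class pair must be an edge (non-adjacent pairs have equal Perron value), and use the eigenvalue equation $\rho y_i=\sum_{j\neq i}|V_j|y_j+y_i\,|N(u)\cap V_i|$ for $u\in V_i$ (with $y_j$ the common Perron value on $V_j$) to deduce that $G[V_i]$ is regular of some degree $k_i$. Picking one vertex from each class exhibits a $K_s$, so $s\le r$.

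Second, let $G''=K_{|V_1|,\ldots,|V_s|}$ be the complete $s$-partite graph obtained by deleting the within-class edges of $G$. Then $G''$ is $K_{r+1}$-free with $G\supseteq G''$, so $\rho(G)\ge\rho(G'')$ with equality iff $G=G''$. I would independently verify that $T(n,r)$ uniquely maximizes $\rho$ among complete multipartite graphs on $n$ vertices with at most $r$ parts: for a fixed number of parts the Perron root of the quotient matrix strictly increases under any vertex transfer from a larger to a smaller part (so the balanced partition wins), and splitting the largest part in two creates a strict supergraph and hence strictly larger $\rho$ (so using all $r$ parts is optimal). The chain $\rho(G)\ge\rho(T(n,r))\ge\rho(G'')$ must therefore collapse to equality throughout, yielding $G=G''=T(n,r)$.

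The main obstacle is the first step: one has to verify that the symmetrization simultaneously strictly increases the Rayleigh quotient and preserves $K_{r+1}$-freeness, and then squeeze the structural information $x_u=x_v$ for all non-adjacent pairs into the partition picture above. Once this is in hand, the second step is technically cleaner, reducing to two standard monotonicity facts: for Perron roots under balanced vertex transfers between parts, and under proper edge-subset inclusion for connected graphs.
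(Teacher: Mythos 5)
The paper does not actually prove this statement --- it is imported verbatim from Nikiforov \cite{08} --- so the only question is whether your argument stands on its own, and it does not quite. Your first step is sound and standard: the Zykov-type symmetrization (replacing $N_G(v)$ by $N_G(u)$ when $x_u>x_v$ and $uv\notin E$) preserves $K_{r+1}$-freeness and increases the Rayleigh quotient by $2\rho x_v(x_u-x_v)>0$, so in an extremal graph every non-adjacent pair carries equal Perron weight, all cross-class pairs of your partition are edges, each $G[V_i]$ is regular, and $s\le r$. The gap is in the endgame. Your three inequalities --- $\rho(G)\ge\rho(T(n,r))$ by extremality, $\rho(T(n,r))\ge\rho(G'')$ because $T(n,r)$ maximizes over complete multipartite graphs with at most $r$ parts, and $\rho(G)\ge\rho(G'')$ because $G''\subseteq G$ --- all point the same way, so nothing ``collapses to equality.'' To squeeze you would need $\rho(G'')\ge\rho(G)$, i.e.\ that deleting the within-class edges does not lower the spectral radius, which is precisely the unproved assertion that $G$ has no within-class edges. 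This is not automatic: when $s<r$, $K_{r+1}$-freeness only forces $\sum_i\omega(G[V_i])\le r$, so edges inside a class are structurally permitted and must be excluded by an argument. (Your Perron-value classes are also coarser than non-adjacency classes --- adjacent vertices may share a Perron value --- which is exactly why the within-class edges cannot be wished away in your formulation.)

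The missing ingredient is a second symmetrization showing that non-adjacency is transitive in the extremal graph. If $u,w$ are adjacent and share a non-neighbour $v$, then by your first step $x_u=x_v=x_w$, and redirecting \emph{both} $N(u)$ and $N(w)$ onto $N(v)$ changes the Rayleigh quotient by
\begin{equation*}
2\rho x_v(x_u+x_w)-2\rho\bigl(x_u^2+x_w^2\bigr)+2x_ux_w \;=\; 2x_ux_w \;>\;0,
\end{equation*}
the surviving term arising because the edge $uw$ is subtracted twice in the naive bookkeeping; $K_{r+1}$-freeness is preserved since $u,v,w$ become pairwise non-adjacent twins and any clique of the new graph meets at most one of them. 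This forces the extremal graph to be complete multipartite, after which your comparison among complete multipartite graphs with at most $r$ parts (balancing within a fixed number of parts, then splitting a part to use all $r$) legitimately finishes. Two smaller points: the strict inequality $2\rho x_v(x_u-x_v)>0$ needs $x_v>0$, and ``$\rho(G)\ge\rho(G'')$ with equality iff $G=G''$'' needs connectivity of $G$; both require a one-line justification that the extremal graph is connected.
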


\begin{proof}[\textbf{Proof of Theorem \ref{1}}]
	Let $G$ be a graph of order $n$ and rank $r$. We claim that $G$ is a $K_{r+1}$-free graph. Otherwise, since $K_{r+1}$ is a subgraph of $G$, selecting the rows and columns corresponding to the vertices in $K_{r+1}$ can obtain a nonzero minor of order $r+1$ of $A(G)$, i.e.,
	\begin{align} \det \left(\begin{array}{cccc}
	0 &1&\cdots &1 \\
	1 &0&\cdots &1 \\
	\vdots &\vdots &\ddots &\vdots  \\
	1&1 &\cdots &0 \\
	\end{array}\right)_{(r+1)\times(r+1)}=(-1)^{r}\cdot r \neq 0.\notag
	\end{align}
	Therefore, we have $R(G)\geq r+1$, a contradiction. Since $T(n,r)=K_r \circ (\lceil \frac{n}{r} \rceil,\dots,\lceil \frac{n}{r} \rceil,\lfloor \frac{n}{r} \rfloor,\dots,\lfloor \frac{n}{r} \rfloor) \in M_{n}(K_r)$, by Theorem \ref{2.1}, we have $R(T(n,r))=R(K_r)=r$. By Theorem \ref{2.2}, we obtain $\rho(G)<\rho(T(n,r))$ unless $G=T(n,r)$.
\end{proof}

\section{The proof of Theorem \ref{2}}
In this section, we focus on the extremal graph that has the minimum spectral radius among all connected graphs of order $n$ and rank 5. We firstly outline our proof for Theorem \ref{2}.

{\bf Step 1.} We first apply a result of Monsalve and Rada in \cite{07} to prove that the extremal graph with minimum spectral radius belongs to $M_{n}(G_{1},G_{7}$, $G_{10})$. 

{\bf Step 2.} Then, using the method of comparing characteristic polynomials, we characterize the extremal graph with minimum spectral radius in $M_{n}(G_{1})$, $M_{n}(G_{7})$ and $M_{n}(G_{10})$, respectively. 

{\bf Step 3.} Next, for $n\geq 12$, we compare the spectral radii of these three types of extremal graphs by some well-known results and obtain that the extremal graph of order $n$ and rank $5$ with minimum spectral radius is $G_{10}\circ(1,1,1,1,k,n-4-k)$ for some integer $k$. Further, we determine $k \in \{ \lfloor \frac{6n-37-\sqrt{24n+1}}{18} \rfloor, \lceil \frac{6n-37-\sqrt{24n+1}}{18} \rceil \}$. 

{\bf Step 4.} Finally, for $5\leq n\leq 11$, we obtain the extremal graphs by calculating directly the spectral radii of the extremal graphs in $M_{n}(G_{1})$, $M_{n}(G_{7})$ and $M_{n}(G_{10})$, respectively.

\subsection{Step 1}
We begin with recalling a well-known result.

\begin{theorem}\cite{09} \label{3.1}
	If $H$ is a proper subgraph of a connected graph $G$, then $\rho(H) <\rho(G)$.
\end{theorem}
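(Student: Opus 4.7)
The plan is a direct Perron--Frobenius argument, exploiting that connectedness of $G$ makes $A(G)$ an irreducible nonnegative matrix. By the Perron--Frobenius theorem (already invoked in the introduction), there is then a strictly positive eigenvector $\mathbf{y}$ with $A(G)\mathbf{y} = \rho(G)\mathbf{y}$, and $\rho(G)$ is a simple eigenvalue. As a preliminary reduction I would pad everything to live on the same vertex set: if $V(H) \subsetneq V(G)$, extend $A(H)$ by zero rows and columns for the missing vertices (this clearly preserves $\rho(H)$). After this reduction $A(H) \le A(G)$ entrywise, and the two matrices differ in at least one entry, because $H$ is a proper subgraph of $G$.

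Next I would obtain the weak inequality $\rho(H) \le \rho(G)$. Let $\mathbf{x} \ge 0$, $\mathbf{x} \neq \mathbf{0}$, be a Perron eigenvector of $A(H)$ at $\rho(H)$ (again padded by zeros). Combining the two identities with the entrywise bound gives
\[
\rho(H)\,\mathbf{y}^{T}\mathbf{x} \;=\; \mathbf{y}^{T} A(H) \mathbf{x} \;\le\; \mathbf{y}^{T} A(G) \mathbf{x} \;=\; \rho(G)\,\mathbf{y}^{T}\mathbf{x},
\]
and dividing by $\mathbf{y}^{T}\mathbf{x} > 0$ (positive since $\mathbf{y} > 0$ and $\mathbf{x} \ge 0$, $\mathbf{x}\neq\mathbf{0}$) yields $\rho(H) \le \rho(G)$.

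The only genuine obstacle is upgrading this to a \emph{strict} inequality, and this is precisely where the irreducibility of $A(G)$ (i.e., the connectedness hypothesis on $G$) must be used; without it the statement is false, e.g.\ a disjoint union of two copies of $K_{2}$ has the same spectral radius as each copy. My plan is to assume for contradiction that $\rho(H) = \rho(G) =: \rho$ and form the nonnegative vector
\[
\mathbf{z} \;:=\; A(G)\mathbf{x} - A(H)\mathbf{x} \;=\; A(G)\mathbf{x} - \rho\mathbf{x} \;\ge\; \mathbf{0}.
\]
Pairing against $\mathbf{y}$ gives $\mathbf{y}^{T}\mathbf{z} = \rho\,\mathbf{y}^{T}\mathbf{x} - \rho\,\mathbf{y}^{T}\mathbf{x} = 0$, and $\mathbf{y} > 0$ together with $\mathbf{z} \ge \mathbf{0}$ forces $\mathbf{z} = \mathbf{0}$. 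Hence $\mathbf{x}$ is itself a nonnegative eigenvector of the irreducible matrix $A(G)$ at the eigenvalue $\rho(G)$, so by Perron--Frobenius $\mathbf{x} > \mathbf{0}$. But then $(A(G) - A(H))\mathbf{x} = \mathbf{0}$ with $A(G) - A(H) \ge 0$ entrywise and $\mathbf{x} > \mathbf{0}$ forces $A(G) = A(H)$, contradicting $H$ being a proper subgraph of $G$. Therefore $\rho(H) < \rho(G)$, as claimed.
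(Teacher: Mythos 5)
Your proof is correct. Note, however, that the paper does not prove this statement at all: Theorem \ref{3.1} is quoted as a known classical result from Stevanovi\'{c}'s book \cite{09}, so there is no in-paper argument to compare against. Your Perron--Frobenius argument (pad $A(H)$ by zeros, pair a nonnegative eigenvector of $A(H)$ against the positive Perron vector of the irreducible matrix $A(G)$ to get $\rho(H)\leq\rho(G)$, then use positivity of $\mathbf{y}$ and irreducibility to force $A(H)=A(G)$ in the equality case) is the standard textbook proof, and each step, including the strictness upgrade, is justified correctly.
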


In \cite{07}, Theorem \ref{3.1} is used to prove the following results.

\begin{theorem}\cite{07} \label{3.2}
	Let $G$ be a connected graph with $k$ vertices and $\textbf{m}=(n_{1},n_{2},\dots,n_{k})$ a vector of positive integers. If $v_{1}v_{2} \in E(G)$, then $$\rho((G-v_{1}v_{2}) \circ \textbf{m}) < \rho(G \circ \textbf{m}).$$
\end{theorem}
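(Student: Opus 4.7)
The plan is to observe that $(G-v_1v_2)\circ \textbf{m}$ is a proper spanning subgraph of $G\circ \textbf{m}$ and then invoke Theorem \ref{3.1} directly, so the entire statement should follow once two structural facts are verified.

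First I would check that $G\circ \textbf{m}$ is connected. Because $G$ is connected and every $n_i\geq 1$, for any two vertices $u\in V_i$ and $w\in V_j$ one can take any path $v_i = v_{i_0},v_{i_1},\dots,v_{i_t} = v_j$ in $G$ and lift it to a walk in $G\circ \textbf{m}$ by choosing one representative in each $V_{i_s}$; two vertices of the same part $V_i$ are joined via any neighbor part, which exists because $G$ has at least two vertices and is connected. This puts us in the setting where Theorem \ref{3.1} applies to $G\circ \textbf{m}$.

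Next I would compare the edge sets of $(G-v_1v_2)\circ \textbf{m}$ and $G\circ \textbf{m}$. By the definition of the multiplication operation, these two graphs have identical vertex sets and they agree on all edges arising from edges of $G$ other than $v_1v_2$. The sole difference is the complete bipartite block between $V_1$ and $V_2$: the $n_1 n_2$ edges joining $V_1$ to $V_2$ are present in $G\circ \textbf{m}$ but absent in $(G-v_1v_2)\circ \textbf{m}$. Since $n_1,n_2\geq 1$, at least one such edge exists, making $(G-v_1v_2)\circ \textbf{m}$ a proper subgraph of the connected graph $G\circ \textbf{m}$.

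With connectedness and proper containment in hand, a single application of Theorem \ref{3.1} yields $\rho((G-v_1v_2)\circ \textbf{m})<\rho(G\circ \textbf{m})$, as desired. There is no real obstacle here; the only point that deserves a sentence of care is the connectedness of $G\circ \textbf{m}$, which could in principle be broken only in the trivial case $|V(G)|=1$ that is excluded by the hypothesis $v_1v_2\in E(G)$.
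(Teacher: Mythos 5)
Your proposal is correct and follows essentially the same route the paper indicates: it explicitly remarks that Theorem \ref{3.1} is used in \cite{07} to prove Theorem \ref{3.2}, which is exactly your argument of exhibiting $(G-v_{1}v_{2})\circ \textbf{m}$ as a proper spanning subgraph of the connected graph $G\circ \textbf{m}$. The connectedness check and the observation that the $n_{1}n_{2}\geq 1$ edges between $V_{1}$ and $V_{2}$ are removed are both sound.
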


\begin{theorem}\cite{07} \label{3.3}
	Let $G$ be a connected graph with $k$ vertices and $\textbf{m}=(n_{1},n_{2},\dots,n_{k})$ a vector of positive integers. If $v_{i}v_{j} \notin E(G)$ and $N(v_{i}) \subsetneq N(v_{j})$, then
	$$\rho(G \circ (n_{1},\dots,n_{i},\dots,n_{j},\dots,n_{k})) < \rho(G \circ (n_{1},\dots,n_{i}-1,\dots,n_{j}+1,\dots,n_{k})).$$
\end{theorem}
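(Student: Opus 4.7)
The plan is to realize $H_2 := G \circ (n_1,\dots,n_i-1,\dots,n_j+1,\dots,n_k)$ as an edge-supergraph of $H_1 := G \circ (n_1,\dots,n_i,\dots,n_j,\dots,n_k)$ (up to isomorphism), and then apply Theorem~\ref{3.1} directly. The idea is to ``move'' a single vertex from the independent set $V_i$ over into $V_j$; because $N_G(v_i) \subsetneq N_G(v_j)$, this relabeling can only add edges to $H_1$, never delete them.

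Concretely, I would fix any $u \in V_i$ inside $H_1$. Its neighborhood is $\bigcup_{v_p \in N_G(v_i)} V_p$, which, by the hypothesis $v_iv_j \notin E(G)$, is disjoint from both $V_i$ and $V_j$. I then form $H_1'$ by joining $u$ to every vertex of $V_p$ for each $v_p \in N_G(v_j) \setminus N_G(v_i)$. The strict containment guarantees this set of ``new neighbor-blocks'' is nonempty, so at least one new edge is introduced, and the updated neighborhood of $u$ becomes
$$N_{H_1'}(u) \;=\; \bigcup_{v_p \in N_G(v_j)} V_p,$$
which is precisely the common neighborhood shared by every $V_j$-vertex. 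Relabeling $u$ as the $(n_j+1)$-th element of $V_j$ therefore provides an explicit isomorphism $H_1' \cong H_2$. Verifying this isomorphism reduces to checking that no spurious edges appear between $u$ and $V_j$ or between $u$ and $V_i\setminus\{u\}$, which follow respectively from $v_j \notin N_G(v_j)$ and from the hypothesis $v_i \notin N_G(v_j)$; adjacencies between $u$ and any other block $V_l$ match those of a $V_j$-vertex because of the symmetry $v_l \in N_G(v_j) \Leftrightarrow v_j \in N_G(v_l)$.

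To conclude, $H_1$ is connected since $G$ is connected and every multiplicity of the vector is positive, and edge-addition preserves connectedness, so $H_1'$ is also connected. Applying Theorem~\ref{3.1} to the proper subgraph $H_1 \subsetneq H_1'$ yields $\rho(H_1) < \rho(H_1') = \rho(H_2)$, which is the required strict inequality. The only delicate point---hardly an obstacle---is the combinatorial bookkeeping that identifies $H_1'$ with $H_2$; once that is verified, the spectral comparison is an immediate consequence of the strict monotonicity of the Perron eigenvalue under edge addition in connected graphs.
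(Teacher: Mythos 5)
Your proposal is correct and is essentially the argument the paper points to: it observes that Theorem~\ref{3.1} is what underlies this result, and your construction (moving one vertex of $V_i$ into $V_j$, using $N(v_i)\subsetneq N(v_j)$ and $v_iv_j\notin E(G)$ to see that this only adds edges, hence exhibits $G\circ\textbf{m}$ as a proper spanning subgraph of a graph isomorphic to the target) is exactly how that monotonicity theorem gets applied. The isomorphism bookkeeping you flag checks out, so no further comment is needed.
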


By Theorem \ref{3.2}, we obtain the following proposition.

\begin{proposition}\label{sets}
	Let $G$ be the extremal graph with minimum spectral radius among all connected graphs of order $n$ and rank 5. Then $G \in M_{n}(G_{1}, G_{7}, G_{10})$.
\end{proposition}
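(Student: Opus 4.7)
The plan is to apply Theorem \ref{3.2} systematically to rule out all reduced graphs except $G_1$, $G_7$, and $G_{10}$. Since $G$ is a connected graph of order $n$ and rank $5$, the classification of Chang--Huang--Yeh gives $G \in M_n(G_i)$ for some $i \in \{1,2,\dots,24\}$, so I may write $G = G_i \circ \mathbf{m}$ for some positive integer vector $\mathbf{m}$.

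Suppose for contradiction that $i \notin \{1,7,10\}$. The goal is to produce an edge $uv \in E(G_i)$ such that $G_i - uv$ is still connected and still has rank $5$. Given such an edge, Theorem \ref{3.2} yields
\[
\rho\bigl((G_i - uv) \circ \mathbf{m}\bigr) < \rho(G_i \circ \mathbf{m}) = \rho(G),
\]
while $(G_i - uv) \circ \mathbf{m}$ has $n$ vertices, is connected (because $G_i - uv$ is), and by Theorem \ref{2.1} has rank equal to $R(G_i - uv) = 5$. Hence $(G_i-uv)\circ \mathbf{m}$ lies in $M_n(G_j)$ for some $j \in \{1,\dots,24\}$, contradicting the minimality of $\rho(G)$ in the class of connected order-$n$ rank-$5$ graphs. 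Therefore $i \in \{1,7,10\}$.

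The only remaining task is a case-by-case verification: for each of the $21$ graphs $G_i$ with $i \in \{2,3,4,5,6,8,9,11,12,\dots,24\}$ shown in Figure \ref{Fig.1.1}, I would exhibit an explicit edge whose deletion preserves both connectivity and rank $5$; conversely, one checks that for $G_1$, $G_7$, and $G_{10}$ every edge is either a bridge or has the property that its removal lowers the rank below $5$, so the reduction process terminates precisely at these three graphs. I expect the main obstacle to be precisely this case analysis. Although each individual check only involves a small graph on at most five vertices---so connectivity can be read off by inspection and the rank check reduces to a $5 \times 5$ determinant computation---the number of cases and the need to extract each edge set from Figure \ref{Fig.1.1} makes the verification finite but tedious.
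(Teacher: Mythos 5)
Your overall strategy is the same as the paper's: delete edges from the reduced graph $G_i$, invoke Theorem \ref{3.2}, and contradict the minimality of $\rho(G)$. One point where you genuinely improve on the paper is the observation that you need not identify the reduced graph underlying the edge-deleted blow-up: once the deleted graph is connected, of order $n$ and of rank $5$, the Chang--Huang--Yeh classification already places it in $M_{n}(G_{1},\dots,G_{24})$, which is all that the minimality argument requires. The paper instead exhibits the deleted graph explicitly as a blow-up of $G_{1}$ or $G_{10}$ (the graphs $H_{1},\dots,H_{4}$), which is more work.

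However, there is a concrete gap: you ask for a \emph{single} edge $uv\in E(G_i)$ such that $G_i-uv$ is connected and still has rank $5$, and such an edge does not always exist. Take $G_i=K_5$, which is necessarily one of the five-vertex reduced graphs in Figure \ref{Fig.1.1}: it is connected, has rank $5$ (its adjacency matrix has determinant $4$), and has no two vertices with equal neighbourhoods, so it is not a blow-up of anything smaller. For \emph{every} edge $e$ of $K_5$, the two endpoints of $e$ become nonadjacent vertices with identical neighbourhoods in $K_5-e$, so $K_5-e=K_4\circ(2,1,1,1)$ and by Theorem \ref{2.1} its rank is $4$, not $5$. Hence your reduction step cannot be executed for this case, and the case analysis does not terminate as described. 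The repair is to delete a \emph{set} of edges: for instance, removing two adjacent edges from $K_5$ leaves a connected spanning subgraph of rank $5$, and Theorem \ref{3.2} may be applied twice in succession (its hypothesis only requires the current reduced graph to be connected, not that the intermediate graph have rank $5$), or one may apply Theorem \ref{3.1} directly to the blown-up graphs. This is in effect what the paper does by passing in one step to a spanning proper subgraph already known to be a blow-up of $G_{1}$ or $G_{10}$. A further, minor inaccuracy: your claimed converse that every non-bridge edge of $G_{1}$, $G_{7}$, $G_{10}$ has removal \emph{lowering} the rank below $5$ is not quite right either (deleting a suitable non-bridge edge of $G_{10}$ raises the rank to $6$), but that converse is not needed for the proposition.
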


\begin{proof}
	Let $\textbf{m}_{1}=(n_{1}, n_{2}, n_{3}, n_{4}, n_{5}),$ $\textbf{m}_{2}=(n_{1}, n_{2}, n_{3}, n_{4}, n_{5}, n_{6}),$ $\textbf{m}_{3}=(n_{1}, n_{2},$ $n_{3}, n_{4}, n_{5}, n_{6}, n_{7})$ and $\textbf{m}_{4}=(n_{1}, n_{2}, n_{3},$ $n_{4}, n_{5}, n_{6}, n_{7}, n_{8})$ be arbitrary vectors of positive integers with $\sum_{i=1}n_{i}=n$.
   	As a consequence of Theorem \ref{3.2}, we have
   	\begin{align*}
	&\rho(G_{1} \circ \textbf{m}_{1}) < \rho(G_{i} \circ \textbf{m}_{1}), i=2,3,4,5,6,8,\\
	&\rho(G_{10} \circ \textbf{m}_{2}) < \rho(G_{j} \circ \textbf{m}_{2}), j=11,12,13,14,15.\\
	\end{align*}
	
	Thus,
	$$G \in M_{n}(G_{1}, G_{7}, G_{9}, G_{10}, G_{16}, G_{17}, G_{18}, G_{19}, G_{20},G_{21},G_{22}, G_{23}, G_{24}).$$
	
	Let $H_{1}=G_{1} \circ (1, 1, 1, 1, 2)$, $H_{2}=G_{10} \circ (1, 1, 1, 1, 1, 2)$, $H_{3}=G_{10} \circ (1, 1, 1, 1, 2, 1)$ and $H_{4}=G_{10} \circ (1, 1, 1, 1, 1, 3)$, as shown in Figure \ref{Fig.3.1}.
	
	Obiviously,
	\begin{itemize}
	\item $H_{1}$ is the spanning proper subgraph of $G_{9}$;
	\item $H_{2}$ is the spanning proper subgraph of $G_{i},i \in \{16,17,18,19,21,22\}$;
	\item $H_{3}$ is the spanning proper subgraph of $G_{20}$;
	\item $H_{4}$ is the spanning proper subgraph of $G_{j},j \in \{23,24\}$.
	\end{itemize}

	Therefore, it follows from Theorem \ref{3.2} that
	\begin{align*}
	&\rho(G_{1} \circ \textbf{m}_{2}')=\rho(H_{1} \circ \textbf{m}_{2})<\rho(G_{9} \circ \textbf{m}_{2}),\\
	&\rho(G_{10} \circ \textbf{m}_{3}')=\rho(H_{2} \circ \textbf{m}_{3})<\rho(G_{i} \circ \textbf{m}_{3}), i=16,17,18,19,21,22,\\
	&\rho(G_{10} \circ \textbf{m}_{3}'')=\rho(H_{3} \circ \textbf{m}_{3})<\rho(G_{20} \circ \textbf{m}_{3}),\\
	&\rho(G_{10} \circ \textbf{m}_{4}')=\rho(H_{4} \circ \textbf{m}_{4})<\rho(G_{j} \circ \textbf{m}_{4}), j=23, 24,
	\end{align*}
	where $\textbf{m}_{2}'=(n_{1}, n_{2}, n_{3}, n_{4}, n_{5}+n_{6}),$ $\textbf{m}_{3}'=(n_{1}, n_{2}, n_{3}, n_{4}, n_{5}, n_{6}+n_{7}),$ $\textbf{m}_{3}''=(n_{1}, n_{2}, n_{3}, n_{4},n_{5}+n_{7},n_{6})$ and $\textbf{m}_{4}'=(n_{1}, n_{2}, n_{3}, n_{4}, n_{5}, n_{6}+n_{7}+n_{8}).$
	
	Hence, $G \in M_{n}(G_{1}, G_{7}, G_{10})$.
	
	\begin{figure}[H]
		\centering
		\includegraphics{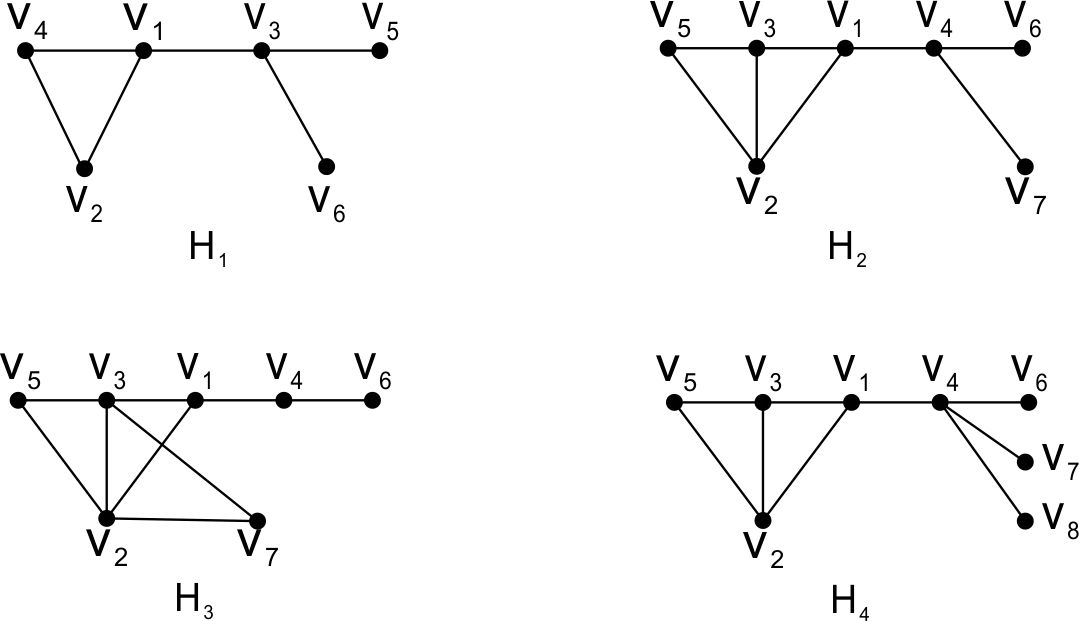}
		\caption{The graphs $H_{i}, i=1,2,3,4$.}
		\label{Fig.3.1}
	\end{figure}

\end{proof}

\subsection{Step 2}
In this subsection we characterize the extremal graphs with minimum spectral radii in $M_{n}(G_{1}), M_{n}(G_{7})$ and $M_{n}(G_{10})$, respectively. To accomplish this, let's introduce some classic results in spectral graph theory.

\begin{definition}\cite{10} \label{3.4}
	Let $A$ be an $n \times n$ real matrix whose rows and columns are indexed by $X=\{1,2,\dots,n\}$. We partition $X$ into ${X_{1},X_{2},\dots,X_{k}}$ in order and rewrite $A$ according to the partition ${X_{1},X_{2},\dots,X_{k}}$ as follows:
	\begin{align} A=\left(\begin{array}{ccc}
	A_{1,1} &\cdots &A_{1,k} \\
	\vdots &\ddots &\vdots  \\
	A_{k,1} &\cdots &A_{k,k} \\
	\end{array}\right),\notag
	\end{align}
	where $A_{i,j}$ is the block of $A$ formed by rows in $X_{i}$ and the columns in $X_{j}$. Let $b_{i,j}$ denote the average row sum of $A_{i,j}$. Then the matrix $B=[b_{i,j}]$ will be called the \textbf{quotient matrix} of the partition of $A$. In particular, when the row sum of each block $A_{i,j}$ is constant, the partition is called an \textbf{equitable partition}.
\end{definition}

\begin{theorem}\cite{10} \label{3.5}
	Let $A \geq 0$ be an irreducible square matrix, $B$ be the quotient matrix of an equitable partition of $A$. Then the spectrum of $A$ contains the spectrum of $B$ and $\rho(A)=\rho(B).$
\end{theorem}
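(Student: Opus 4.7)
The plan is to encode the equitable partition by a single matrix identity and then read off both conclusions from it. Let $P$ be the $n \times k$ characteristic matrix of the partition, whose $i$-th column is the indicator vector $\mathbf{1}_{X_i}$ of the block $X_i$. Because the blocks are nonempty and pairwise disjoint, the columns of $P$ are linearly independent, so $P$ has full column rank $k$. I would first verify that the equitable partition condition is exactly the identity $AP = PB$: the $(l,i)$-entry of $AP$ equals $\sum_{j \in X_i} A_{l,j}$, which by hypothesis depends only on the block $X_r$ containing $l$ and equals $b_{r,i}$, giving precisely $(PB)_{l,i}$.

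With this identity in hand, the inclusion $\mathrm{spec}(B) \subseteq \mathrm{spec}(A)$ follows immediately: if $By = \lambda y$ for some nonzero $y \in \mathbb{R}^k$, then
\[
A(Py) \;=\; (AP)y \;=\; (PB)y \;=\; P(By) \;=\; \lambda(Py),
\]
and $Py \neq 0$ because $P$ has full column rank. Hence $\lambda \in \mathrm{spec}(A)$, which in particular yields $\rho(B) \leq \rho(A)$.

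For the reverse inequality $\rho(A) \leq \rho(B)$, the plan is to invoke the Perron--Frobenius theorem on the irreducible nonnegative matrix $A$: this provides a strictly positive left eigenvector $u \in \mathbb{R}^n$ with $u^{T} A = \rho(A)\, u^{T}$. Set $v = P^{T} u \in \mathbb{R}^k$; since $u > 0$ entrywise and each row of $P^{T}$ is the indicator vector of a nonempty block, every component of $v$ is a sum of strictly positive numbers, so $v > 0$ and in particular $v \neq 0$. Transposing the identity $AP = PB$ into $P^{T} A^{T} = B^{T} P^{T}$ and applying it to $u$ gives
\[
B^{T} v \;=\; B^{T} P^{T} u \;=\; P^{T} A^{T} u \;=\; \rho(A)\, P^{T} u \;=\; \rho(A)\, v,
\]
so $\rho(A) \in \mathrm{spec}(B^{T}) = \mathrm{spec}(B)$, whence $\rho(A) \leq \rho(B)$. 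Combined with the previous paragraph this forces $\rho(A) = \rho(B)$.

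The argument is essentially bookkeeping around the single identity $AP = PB$, and I do not expect any serious obstacle. The two points worth confirming carefully are (i) that $P$ has full column rank, so that $Py \neq 0$ whenever $y \neq 0$, and (ii) that the irreducibility hypothesis on $A$ is genuinely used, namely to guarantee a strictly positive left Perron vector, which is exactly what makes $P^{T} u > 0$. Without irreducibility the inclusion $\mathrm{spec}(B) \subseteq \mathrm{spec}(A)$ still holds, but the equality of spectral radii can fail, so this is the only hypothesis doing nontrivial work in the second half of the proof.
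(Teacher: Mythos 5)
The paper offers no proof of this statement---it is quoted verbatim from Brouwer and Haemers \cite{10}---and your argument is the standard textbook proof via the intertwining identity $AP=PB$ together with a positive left Perron vector; it is correct in every step. Only your closing side remark is slightly off: even for a reducible nonnegative $A$ there is still a nonnegative, nonzero left eigenvector $u$ for $\rho(A)$, and since the blocks cover all indices one gets $P^{T}u\neq 0$ anyway, so the equality $\rho(A)=\rho(B)$ does not in fact require irreducibility (irreducibility only buys the strict positivity of $P^{T}u$, which is not needed here).
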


\begin{theorem}\cite{11} \label{3.6}
	Let $G$ and $H$ be two connected graphs such that $\phi(H,x)>\phi(G,x)$ for $x\geq \rho(G)$. Then $\rho(H)<\rho(G)$.
\end{theorem}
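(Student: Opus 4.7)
The plan is to argue by contradiction. I assume $\rho(H) \geq \rho(G)$ and show that the hypothesis $\phi(H,x) > \phi(G,x)$ must fail at a suitable point in the interval $[\rho(G),\infty)$.

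The key preliminary observation is a sign property of the characteristic polynomial of an arbitrary graph $F$ on $n$ vertices. Since $A(F)$ is a real symmetric matrix, all its eigenvalues $\lambda_{1}(F) \leq \lambda_{2}(F) \leq \cdots \leq \lambda_{n}(F) = \rho(F)$ are real, and therefore
\[
\phi(F,x) = \prod_{i=1}^{n}\bigl(x - \lambda_{i}(F)\bigr).
\]
For $x > \rho(F)$ every factor is strictly positive, so $\phi(F,x) > 0$; and of course $\phi(F,\rho(F)) = 0$. I will apply this to both $F = G$ and $F = H$.

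Now I split the assumption $\rho(H) \geq \rho(G)$ into two subcases. First, if $\rho(H) > \rho(G)$, then $x_{0} := \rho(H)$ lies in the range $x \geq \rho(G)$ where the hypothesis is in force, so $\phi(H,x_{0}) > \phi(G,x_{0})$; but the preliminary observation gives $\phi(H,x_{0}) = 0$ while $\phi(G,x_{0}) > 0$, a contradiction. Second, if $\rho(H) = \rho(G)$, I plug $x_{0} := \rho(G) = \rho(H)$ into the hypothesis to obtain $0 = \phi(H,\rho(H)) > \phi(G,\rho(G)) = 0$, again impossible. In either subcase the assumption fails, so $\rho(H) < \rho(G)$.

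The argument has no real obstacle: the whole proof rests on the elementary fact that a monic real-rooted polynomial is strictly positive to the right of its largest root, together with the symmetry of adjacency matrices. The one subtle point worth emphasizing is that the hypothesis is a \emph{strict} inequality already at the endpoint $x = \rho(G)$, and this strictness is exactly what rules out the equality case $\rho(H) = \rho(G)$; if the hypothesis were only $\phi(H,x) \geq \phi(G,x)$ on $[\rho(G),\infty)$, then the equality case could not be excluded and the conclusion would have to weaken to $\rho(H) \leq \rho(G)$.
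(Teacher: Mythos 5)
Your proof is correct: the two-case contradiction argument, resting only on the fact that a monic real-rooted polynomial is zero at its largest root and strictly positive to the right of it, is exactly the standard proof of this lemma of Li and Feng. The paper itself offers no proof (it cites the result from the literature), so there is nothing to compare against; your argument is self-contained, and your closing remark correctly identifies why the strict inequality at the endpoint $x=\rho(G)$ is what excludes the case $\rho(H)=\rho(G)$.
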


\begin{theorem}\cite{09} \label{cm}
	Let $K_{n_{1},n_{2},\dots,n_{k}}$ be the complete multipartite graph of order $n$. Then
	$$\phi(K_{n_{1},n_{2},\dots,n_{k}},x)=x^{n-k}(1-\sum_{i=1}^{k}\frac{n_{i}}{x+n_{i}})\prod_{i=1}^{k}(x+n_{i}).$$
\end{theorem}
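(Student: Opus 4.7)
The plan is to combine the equitable-partition machinery of Theorem \ref{3.5} with an explicit description of the null space of $A(K_{n_1,\ldots,n_k})$. First I would partition the vertex set into the $k$ color classes $V_1,\dots,V_k$ of sizes $n_1,\dots,n_k$. This is an equitable partition, since every vertex of $V_i$ has exactly $n_j$ neighbors in $V_j$ for $j\ne i$ and none in $V_i$, so the associated quotient matrix is
$$B \;=\; \mathbf{1}\mathbf{n}^{\top} - D,$$
where $D=\mathrm{diag}(n_1,\dots,n_k)$, $\mathbf{1}$ is the all-ones column vector of length $k$, and $\mathbf{n}=(n_1,\dots,n_k)^{\top}$.

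Next I would compute $\phi(B,x)=\det(xI-B)=\det\bigl((xI+D)-\mathbf{1}\mathbf{n}^{\top}\bigr)$ by applying the rank-one determinant identity $\det(M-uv^{\top})=\det(M)\bigl(1-v^{\top}M^{-1}u\bigr)$ to the diagonal matrix $M=xI+D$. Since $\det(M)=\prod_{i=1}^{k}(x+n_i)$ and $\mathbf{n}^{\top}M^{-1}\mathbf{1}=\sum_{i=1}^{k}n_i/(x+n_i)$, this yields
$$\phi(B,x)\;=\;\prod_{i=1}^{k}(x+n_i)\left(1-\sum_{i=1}^{k}\frac{n_i}{x+n_i}\right),$$
which already matches the bracketed part of the claimed formula.

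To pick up the remaining factor $x^{n-k}$, I would exhibit $n-k$ linearly independent eigenvectors of $A=A(K_{n_1,\dots,n_k})$ at eigenvalue $0$: for each class $V_i$, take any of the $n_i-1$ independent vectors supported on $V_i$ whose coordinates sum to zero. A direct check shows $Av=0$ for every such $v$, because rows of $A$ indexed outside $V_i$ pair against $v$ through the full class $V_i$ (summing to $0$), while rows indexed inside $V_i$ are themselves supported outside $V_i$. Together with the $k$ eigenvectors lifted from $B$ via Theorem \ref{3.5} — which are constant on each class — these vectors span $\mathbb{R}^n$, so $\phi(A,x)=x^{n-k}\phi(B,x)$ and the formula follows. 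The only point that needs a word of justification is the linear independence of the two families, which is immediate since the lifts are class-constant while the kernel vectors constructed above are orthogonal to every class-constant vector; this is really the only (small) obstacle in the argument.
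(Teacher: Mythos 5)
Your proposal is correct, but there is nothing in the paper to compare it against: Theorem \ref{cm} is quoted from the book \cite{09} and no proof is given in the text. What you have written is a legitimate self-contained derivation, and it is pleasingly consistent with the toolkit the paper already uses elsewhere (the equitable-partition machinery of Definition \ref{3.4} and Theorem \ref{3.5}, applied in Propositions \ref{g1}--\ref{g7}). Your computation of the quotient matrix $B=\mathbf{1}\mathbf{n}^{\top}-D$ and of $\det\bigl((xI+D)-\mathbf{1}\mathbf{n}^{\top}\bigr)$ via the rank-one determinant identity is right, as is the verification that the class-supported, zero-sum vectors lie in the kernel of $A$ and are orthogonal to all class-constant vectors. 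The one step stated a little too quickly is the claim that $B$ contributes $k$ linearly independent lifted eigenvectors: this needs $B$ to be diagonalizable, which is true but not automatic for a non-symmetric quotient matrix (it follows because $B=(S^{\top}S)^{-1}S^{\top}AS$ is similar to the symmetric matrix $(S^{\top}S)^{-1/2}S^{\top}AS(S^{\top}S)^{-1/2}$, where $S$ is the characteristic matrix of the partition). A way to sidestep this entirely is to note that the class-constant subspace $U=\operatorname{col}(S)$ is $A$-invariant with $A|_{U}$ represented by $B$, that $U^{\perp}$ is $A$-invariant by symmetry and consists exactly of the vectors summing to zero on each class, and that $A|_{U^{\perp}}=0$ by your kernel computation; hence $\phi(A,x)=\phi(B,x)\,x^{n-k}$ without any eigenvector counting. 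With that small repair or rephrasing, the argument is complete.
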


The following Propositions \ref{g1}, \ref{g10} and \ref{g7} give the extremal graph which attains the minimum spectral radius in $M_{n}(G_{1})$, $M_{n}(G_{10})$ and $M_{n}(G_{7})$, respectively.

\begin{proposition}\label{g1}
	The extremal graph in $M_{n}(G_{1})$ which attains minimum spectral radius is of the form
	$$G_{1} \circ (1, 1, 1, k, n-k-3),$$
	where $1 \leq k\leq \frac{n-3}{2}.$
\end{proposition}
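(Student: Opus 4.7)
The plan is to combine Theorem \ref{3.3} (for the easy reductions) with characteristic-polynomial comparison via Theorem \ref{3.6} (for the remaining ones), using Theorem \ref{3.5} to reduce spectral radii of blow-ups to eigenvalues of a $5\times 5$ quotient matrix. Take any extremal $G_1 \circ (n_1,n_2,n_3,n_4,n_5) \in M_n(G_1)$. Inspecting $G_1$ in Figure \ref{Fig.1.1}, I would list every non-adjacent ordered pair $(v_i,v_j)$ of vertices of $G_1$ with $N(v_i) \subsetneq N(v_j)$; Theorem \ref{3.3} then forces $n_j = 1$ in the extremal for every such dominating $v_j$, since otherwise shifting one vertex from $V_j$ to $V_i$ would strictly decrease $\rho$.

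Next, for any coordinate not yet pinned down by Theorem \ref{3.3}, I would set up the quotient matrix $B(n_1,\dots,n_5)$ of the equitable partition of $A(G_1\circ(n_1,\dots,n_5))$ into its five parts and compute $\phi(G_1 \circ \textbf{m},x) = \det(xI - B)$ symbolically in the $n_i$. I would then compare $\phi$ for two candidates $\textbf{m}$ and $\textbf{m}'$ differing in only one coordinate (moving one vertex between the remaining free parts so as to move toward the target form $(1,1,1,k,n-k-3)$) and verify the resulting polynomial inequality on $[\rho,\infty)$; by Theorem \ref{3.6} this forces the corresponding multiplicity down to $1$. The conclusion of these two reductions is that the extremal must have the shape $G_1 \circ (1,1,1,n_4,n_5)$ with $n_4+n_5=n-3$ and $n_4,n_5\geq 1$.

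Finally, $v_4$ and $v_5$ should be exchanged by an automorphism of $G_1$ that fixes $\{v_1,v_2,v_3\}$ setwise (a structural fact to be read off Figure \ref{Fig.1.1}), giving the isomorphism $G_1\circ(1,1,1,n_4,n_5) \cong G_1\circ(1,1,1,n_5,n_4)$; one may therefore set $k := \min(n_4,n_5)$, whence $1 \leq k \leq \frac{n-3}{2}$. The main obstacle is the polynomial-comparison step: one needs a clean factorization of $\phi(G_1\circ\textbf{m},x) - \phi(G_1\circ\textbf{m}',x)$ together with a usable lower bound on $\rho(G_1 \circ \textbf{m})$ (for instance via Theorem \ref{cm} applied to a complete-multipartite subgraph on the equitable quotient), so that the inequality can be certified throughout $[\rho,\infty)$ and not only in an asymptotic regime.
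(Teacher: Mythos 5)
Your overall strategy matches the paper's: Theorem \ref{3.3} applied to the one dominated pair ($N(v_5)=\{v_3\}\subsetneq N(v_1)$, $v_1v_5\notin E(G_1)$) forces $n_1=1$, and the remaining coordinates are pinned down by computing $\phi$ from the $5\times 5$ equitable quotient matrix and comparing characteristic polynomials of neighbouring candidates via Theorem \ref{3.6}, with subgraph-based lower bounds on $\rho$ (e.g.\ $K_{n_3,n_5+1}$, $K_{n_4,1,1}$) certifying the sign of the difference on $[\rho,\infty)$. One caveat on that middle stage: before each one-vertex shift the sign of the polynomial difference depends on an ordering of the parts (e.g.\ the paper must first prove $n_5\ge n_3$ by a swap comparison before it can shift a vertex from $V_3$ to $V_5$), so your plan needs these auxiliary swap lemmas in addition to the shift lemmas; they fit your framework but are not free.

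The genuine gap is your final step. The graph $G_1$ is a triangle $v_1v_2v_4$ with the pendant path $v_1v_3v_5$ attached, so $d(v_4)=2$ while $d(v_5)=1$: there is no automorphism of $G_1$ exchanging $v_4$ and $v_5$, and $G_1\circ(1,1,1,n_4,n_5)\not\cong G_1\circ(1,1,1,n_5,n_4)$ in general. (The actual symmetry of $G_1$ swaps $v_2$ and $v_4$, which is what the paper uses to assume $n_4\ge n_2$.) Consequently you cannot set $k=\min(n_4,n_5)$ by symmetry; the inequality $n_5\ge n_4$, equivalently $k\le\frac{n-3}{2}$, must itself be proved spectrally. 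The paper does this as its Claim 3: for $F=G_1\circ(1,1,1,n_4,n_5)$ and $F_4=G_1\circ(1,1,1,n_5,n_4)$ one computes $\phi(F_4,x)-\phi(F,x)=x^{n-3}(x+2)(n_4-n_5)$, which is strictly positive for $x\ge\rho(F)$ when $n_4>n_5$, so Theorem \ref{3.6} gives $\rho(F_4)<\rho(F)$ and the minimizer must have $n_5\ge n_4$. The nonvanishing of this difference also confirms that the two blow-ups are not cospectral, let alone isomorphic, so the symmetry you hoped to read off Figure \ref{Fig.1.1} cannot be rescued.
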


\begin{proof}
	Since $N(v_{5})=\{v_{3}\} \subsetneq N(v_{1})$ and $v_{1}v_{5} \notin E(G_{1})$, then by Theorem \ref{3.3} we have
	$$\rho(G_{1} \circ (1,n_{2},n_{3},n_{4},n_{5}+n_{1}-1)) \leq \rho(G_{1} \circ (n_{1},n_{2},n_{3},n_{4},n_{5})),$$
	with equality if and only if $n_{1}=1$. It follows that the extremal graph in $M_{n}(G_{1})$ which attains minimum spectral radius is of the form $F=G_{1} \circ (1,n_{2},n_{3},n_{4},n_{5})$.
	
	Then $V(F)$ can be naturally partitioned into $5$ parts:
	$$\{V_{1},V_{2},V_{3},V_{4},V_{5}\},$$
	where $V_{i}=\{v_{i}^{1}, \dots, v_{i}^{n_{i}}\}, i=1,2,3,4,5$. Obviously, this partition of $A(F)$ is equitable and the corresponding quotient matrix B is
	\begin{align} B=\left(\begin{array}{ccccc}
	0 &n_{2} &n_{3} &n_{4} &0 \\
	1 &0 &0 &n_{4} &0 \\
	1 &0 &0 &0 &n_{5} \\
	1 &n_{2} &0 &0 &0 \\
	0 &0 &n_{3} &0 &0\\
	\end{array}\right).\notag
	\end{align}
	
	Then the characteristic polynomial of the quotient matrix $B$ is:
	\begin{equation}
	\begin{split}
	\phi(B,x)&=x^{5}-(n_{2}+n_{3}+n_{4}+n_{2}n_{4}+n_{3}n_{5})x^{3}-2n_{2}n_{4}x^{2}+\\
	&(n_{2}n_{3}n_{4}+n_{2}n_{3}n_{5}+n_{3}n_{4}n_{5}+n_{2}n_{3}n_{4}n_{5})x+2n_{2}n_{3}n_{4}n_{5}.
	\end{split}\notag
	\end{equation}
	
	Since $R(A(F))=5$, by Theorem \ref{3.5} we have $\phi(F,x)=x^{n-5} \phi(B,x)$ and $\rho(F)=\rho(A(F))=\rho(B)$.
	
	Note that $G_{1} \circ (1,n_{2},n_{3},n_{4},n_{5}) \cong G_{1} \circ (1,n_{4},n_{3},n_{2},n_{5})$. Therefore, without loss of generality, we suppose that $n_{4} \geq n_{2}.$
	
	\textbf{Claim 1.} $n_{2}=1.$
	
	Assume $n_{2} \geq 2$, let $F_{1}=G_{1} \circ (1,n_{2}-1,n_{3},n_{4}+1,n_{5})$ then
	\begin{equation}
	\begin{split}
	r(x)&=\phi(F_{1},x)-\phi(F,x)\\
	&=x^{n-5}(n_{4}-n_{2}+1)(x^{3}+2x^{2}-(n_{3}+n_{3}n_{5})x-2n_{3}n_{5})\\
	&=x^{n-5}(n_{4}-n_{2}+1)\left(x(x^{2}-n_{3}(n_{5}+1))+2(x^{2}-n_{3}n_{5})\right).
	\end{split} \notag
	\end{equation}
	
	Since $n_{4} \geq n_{2}$, we have $n_{4}-n_{2}+1 >0$. It is clear that $K_{n_{3},n_{5}+1}$ is a proper subgraph of $F$, we obtain $\rho(F)>\rho(K_{n_{3},n_{5}+1})=\sqrt{n_{3}(n_{5}+1)}$, then $r(x)>0$ for $x \geq \rho(F)$.
	
	Thus, by Theorem \ref{3.6}, we have $\rho(F_{1})<\rho(F)$ which contradicts to the extremality of $F$.
	
    \textbf{Claim 2.} $n_{3}=1.$

    Now $F=G_{1} \circ (1,1,n_{3},n_{4},n_{5})$, we claim that $n_{5} \geq n_{3}$. If not, let $F_{2}=G_{1} \circ (1,1,n_{5},n_{4},n_{3})$, then
    \begin{equation}
    \begin{split}
    r(x)&=\phi(F_{2},x)-\phi(F,x)=x^{n-4}(x^{2}-n_{4})(n_{3}-n_{5}).
    \end{split}\notag
    \end{equation}

    Since $n_{3}>n_{5}$, we have $n_{3}-n_{5} >0$. It can be seen that $K_{n_{4},2}$ is a proper subgraph of $F$, we obtain $\rho(F)>\rho(K_{n_{4},2})=\sqrt{2n_{4}}$, then $r(x)>0$ for $x \geq \rho(F)$.

    Thus, by Theorem \ref{3.6}, we have $\rho(F_{2})<\rho(F)$, a contradiction. Therefore $n_{5} \geq n_{3}$.

    Next, we assume $n_{3} \geq 2$, let $F_{3}=G_{1} \circ (1,1,n_{3}-1,n_{4},n_{5}+1)$ then
    \begin{equation}
    \begin{split}
    r(x)&=\phi(F_{3},x)-\phi(F,x)\\
    &=x^{n-5}\left((n_{5}-n_{3}+1)(x^{3}-(2n_{4}+1)x-2n_{4})+x(x^{2}-n_{4})\right).
    \end{split}\notag
    \end{equation}

    Since $n_{5} \geq n_{3}$, we have $n_{5}-n_{3}+1>0$. It is clear that $K_{n_{4},1,1}$ is a proper subgraph of $F$, by Theorem \ref{cm}, we obtain $\rho(F)>\rho(K_{n_{4},1,1})=(\sqrt{8n_{4}+1}+1)/2$, then $r(x)>0$ for $x \geq \rho(F)$.

    Thus, by Theorem \ref{3.6}, we have $\rho(F_{3})<\rho(F)$, which contradicts to the extremality of $F$.

    \textbf{Claim 3.} $n_{5} \geq n_{4}.$

    Now $F=G_{1} \circ (1,1,1,n_{4},n_{5})$. Otherwise, let $F_{4}=G_{1} \circ (1,1,1,n_{5},n_{4})$ then
    \begin{equation}
    \begin{split}
    r(x)&=\phi(F_{4},x)-\phi(F,x)=x^{n-3}(x+2)(n_{4}-n_{5}).
    \end{split}\notag
    \end{equation}

    Since $n_{4} > n_{5}$ and $\rho(F)>0$, then $r(x)>0$ for $x \geq \rho(F)$. By Theorem \ref{3.6}, we have $\rho(F_{4})<\rho(F)$ which contradicts to the extremality of $F$, thus $n_{5} \geq n_{4}$.

    From above three claims, we conclude that the extremal graph with minimum spectral radius in $M_{n}(G_{1})$ is of the form $G_{1} \circ (1, 1, 1, k, n-k-3)$, where $1 \leq k\leq (n-3)/2$.
\end{proof}

Similarly, we characterize the extremal graph with minimum spectral radius in $M_{n}(G_{10})$.

\begin{proposition}\label{g10}
	The extremal graph in $M_{n}(G_{10})$ which attains minimum spectral radius is of the form
	$$G_{10} \circ (1, 1, 1, 1, k, n-k-4),$$
	where $1 \leq k\leq \frac{n-4}{2}.$
\end{proposition}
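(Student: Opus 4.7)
The proof strategy closely parallels that of Proposition \ref{g1}, now carried out on a $6$-vertex reduced graph instead of a $5$-vertex one. The plan is to peel off the multiplicities one at a time using the two tools available: Theorem \ref{3.3} for pairs of non-adjacent vertices with nested neighborhoods, and Theorem \ref{3.6} together with an equitable-partition computation for the remaining cases.

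First, I would examine the adjacency structure of $G_{10}$ and locate every pair of non-adjacent vertices $v_i, v_j$ with $N(v_i) \subsetneq N(v_j)$. Each such pair allows us, by Theorem \ref{3.3}, to migrate the entire mass of $V_i$ into $V_j$ while strictly decreasing the spectral radius whenever $n_i \geq 2$. Iterating this step should force $n_i = 1$ on four of the six vertices of $G_{10}$ (the ``dominated'' ones), so that the extremal graph must have the form $F = G_{10} \circ (1,1,1,1,n_5,n_6)$ after a suitable relabeling that places the two ``free'' coordinates last.

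Second, for such an $F$, the natural partition $V(F) = V_1 \cup \cdots \cup V_6$ with $|V_i| = n_i$ is equitable. By Theorem \ref{3.5}, $\rho(F) = \rho(B)$ where $B$ is the $6 \times 6$ quotient matrix whose off-diagonal entries encode the edges of $G_{10}$ weighted by the $n_j$'s. Since $R(F) = 5$, we will have $\phi(F,x) = x^{n-6}\phi(B,x)$. To pin down the remaining two coordinates, I would compare $F$ with $F' = G_{10} \circ (1,1,1,1,n_5-1,n_6+1)$ (assuming without loss of generality $n_6 \geq n_5$) and show that $r(x) := \phi(F',x) - \phi(F,x) > 0$ for all $x \geq \rho(F)$. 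Following the template of Proposition \ref{g1}, after factoring out the appropriate power of $x$ and a positive coefficient of the form $(n_6 - n_5 + 1)$, the remaining polynomial factor will be bounded below by exhibiting a complete multipartite subgraph of $F$ whose spectral radius (computed from Theorem \ref{cm}) dominates the roots of the factor; since $\rho(F)$ exceeds this subgraph's spectral radius, $r(x) > 0$ holds on the relevant range, and Theorem \ref{3.6} yields $\rho(F') < \rho(F)$ unless $n_5 = 1$. A symmetry argument (swapping $n_5$ and $n_6$ and repeating the comparison) gives the symmetric range $1 \leq k \leq (n-4)/2$.

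The main obstacle is a bookkeeping one rather than a conceptual one: with six parts instead of five, the quotient matrix and its characteristic polynomial become considerably bulkier, so identifying the right complete multipartite subgraph to lower-bound $\rho(F)$ at each comparison step — so that $r(x) > 0$ really holds on $[\rho(F), \infty)$ — requires careful inspection of the edge structure of $G_{10}$. Provided the correct subgraphs are chosen (analogous to $K_{n_3,n_5+1}$, $K_{n_4,2}$, and $K_{n_4,1,1}$ in the proof of Proposition \ref{g1}), each reduction step is routine, and the chain of comparisons terminates at the claimed extremal form $G_{10} \circ (1,1,1,1,k,n-k-4)$.
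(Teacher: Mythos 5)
There are two genuine gaps in your plan. First, Theorem \ref{3.3} cannot do the work you assign to it. Reading the adjacency structure of $G_{10}$ off the quotient matrix, we have $N(v_1)=\{v_2,v_3,v_4\}$, $N(v_2)=\{v_1,v_3,v_5\}$, $N(v_3)=\{v_1,v_2,v_5\}$, $N(v_4)=\{v_1,v_6\}$, $N(v_5)=\{v_2,v_3\}$, $N(v_6)=\{v_4\}$. The only non-adjacent pairs with nested neighborhoods are $(v_5,v_1)$ and $(v_6,v_1)$, so Theorem \ref{3.3} only lets you drain $V_1$ down to a single vertex; it says nothing about $n_2$, $n_3$, $n_4$. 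The paper has to force $n_2=n_3=n_4=1$ by three separate characteristic-polynomial comparisons (via Theorem \ref{3.6}), and two of these require a preliminary swap argument (establishing $n_5\geq n_3$ and $n_6\geq n_4$) before the mass-shift comparison even has a positive leading coefficient. So the bulk of the proof is exactly the part your plan skips.

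Second, your final step would prove something false. You propose to show that moving a vertex from $V_5$ to $V_6$ strictly decreases the spectral radius unless $n_5=1$; if that held, the minimizer would always be $G_{10}\circ(1,1,1,1,1,n-5)$, i.e.\ $k=1$. But the proposition deliberately leaves $k$ anywhere in $[1,(n-4)/2]$, and Table \ref{tab4} (and Lemma \ref{3.13}) show the optimal $k$ drifts away from $1$ as $n$ grows (e.g.\ $k=4$ for $n=22$). No such monotonicity in $k$ exists. The correct last step is only the swap argument $n_6\geq n_5$ (Claim 4 of the paper's proof), which fixes the labelling but leaves $k$ undetermined; pinning down $k$ is postponed to Step 3 of the paper and uses Rayleigh-quotient estimates rather than polynomial comparison.
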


\begin{proof}
	By Theorem \ref{3.3}, we have
	$$\rho(G_{10} \circ (1,n_{2},n_{3},n_{4},n_{5},n_{6}+n_{1}-1)) \leq \rho(G_{10} \circ (n_{1},n_{2},n_{3},n_{4},n_{5},n_{6})),$$
	with equality if and only if $n_{1}=1$. Thus, we may suppose that the extremal graph in  $M_{n}(G_{10})$ which attains minimum spectral radius is of the form $F=G_{10} \circ (1,n_{2},n_{3},n_{4},n_{5},n_{6})$.
	
   Similarly, we obtain
	\begin{align} B=\left(\begin{array}{cccccc}
	0 &n_{2} &n_{3} &n_{4} &0 &0 \\
	1 &0 &n_{3} &0 &n_{5} &0 \\
	1 &n_{2} &0 &0 &n_{5} &0 \\
	1 &0 &0 &0 &0 &n_{6} \\
	0 &n_{2} &n_{3} &0 &0 &0 \\
	0 &0 &0 &n_{4} &0 &0\\
	\end{array}\right),\notag
	\end{align}
	is the quotient matrix of an equitable partition of $A(F)$. The characteristic polynomial of the quotient matrix $B$ is:
	\begin{equation}
	\begin{split}
	\phi(B,x)&=x(x^{5}-(n_{2}+n_{3}+n_{4}+n_{2}n_{3}+n_{2}n_{5}+n_{3}n_{5}+n_{4}n_{6})x^{3}\\
	&-(2n_{2}n_{3}+2n_{2}n_{3}n_{5})x^{2}+(n_{2}n_{3}n_{4}+n_{2}n_{4}n_{5}+n_{2}n_{4}n_{6}\\
	&+n_{3}n_{4}n_{5}+n_{3}n_{4}n_{6}+n_{2}n_{3}n_{4}n_{6}+n_{2}n_{4}n_{5}n_{6}+n_{3}n_{4}n_{5}n_{6})x\\
	&+2n_{2}n_{3}n_{4}n_{5}+2n_{2}n_{3}n_{4}n_{6}+2n_{2}n_{3}n_{4}n_{5}n_{6}).
	\end{split}\notag
	\end{equation}
	
	Since $R(A(F))=5$, by Theorem \ref{3.5} we have $\phi(F,x)=x^{n-6} \phi(B,x)$ and $\rho(F)=\rho(A(F))=\rho(B)$.
	
	Note that $G_{10} \circ (1,n_{2},n_{3},n_{4},n_{5},n_{6}) \cong G_{10} \circ (1,n_{3},n_{2},n_{4},n_{5},n_{6})$. Therefore, without loss of generality, we suppose that $n_{3} \geq n_{2}.$
	
	\textbf{Claim 1.} $n_{2}=1.$
	
	Assume $n_{2} \geq 2$, let $F_{1}=G_{10} \circ (1,n_{2}-1,n_{3}+1,n_{4},n_{5},n_{6})$ then
	\begin{equation}
	\begin{split}
	r(x)&=\phi(F_{1},x)-\phi(F,x)\\
	&=x^{n-5}(n_{3}-n_{2}+1)(x^{3}+2(1+n_{5})x^{2}-(n_{4}+n_{4}n_{6})x-2n_{4}n_{5}\\
	&-2n_{4}n_{6}-2n_{4}n_{5}n_{6})\\
	&=x^{n-5}(n_{3}-n_{2}+1)(x(x^{2}-n_{4}(n_{6}+1))+2n_{5}(x^{2}-n_{4}(n_{6}+1))\\
	&+2(x^{2}-n_{4}n_{6})).
	\end{split}\notag
	\end{equation}
	
	Since $n_{3} \geq n_{2}$, we have $n_{3}-n_{2}+1 >0$. It is clear that $K_{n_{4},n_{6}+1}$ is a proper subgraph of $F$, we obtain $\rho(F)>\rho(K_{n_{4},n_{6}+1})=\sqrt{n_{4}(n_{6}+1)}$, then $r(x)>0$ for $x \geq \rho(F)$.
	
	Thus, by Theorem \ref{3.6}, we have $\rho(F_{1})<\rho(F)$ which contradicts to the extremality of $F$.
		
	\textbf{Claim 2.} $n_{3}=1.$
	
	Now $F=G_{10} \circ (1,1,n_{3},n_{4},n_{5},n_{6})$, we claim that $n_{5} \geq n_{3}$. If not, let $F_{2}=G_{10} \circ (1,1,n_{5},n_{4},n_{3},n_{6})$, then
	\begin{equation}
	\begin{split}
	r(x)&=\phi(F_{2},x)-\phi(F,x)=x^{n-5}(n_{3}-n_{5})(x^{2}-n_{4}n_{6})(x+2).
	\end{split}\notag
	\end{equation}
	
	Since $n_{3} > n_{5}$, we have $n_{3}-n_{5} >0$. It can be seen that $K_{n_{4},n_{6}}$ is a proper subgraph of $F$, we obtain $\rho(F)>\rho(K_{n_{4},n_{6}})=\sqrt{n_{4}n_{6}}$, then $r(x)>0$ for $x \geq \rho(F)$.
	
	Thus, by Theorem \ref{3.6}, we have $\rho(F_{2})<\rho(F)$, a contradiction. Therefore $n_{5} \geq n_{3}$.
	
	Next, we assume $n_{3} \geq 2$, let $F_{3}=G_{10} \circ (1,1,n_{3}-1,n_{4},n_{5}+1,n_{6})$ then
	\begin{equation}
	\begin{split}
	r(x)&=\phi(F_{3},x)-\phi(F,x)\\
	&=x^{n-5}(x+2)\left((n_{5}-n_{3}+2)(x^{2}-n_{4}(n_{6}+1))+n_{4}\right).
	\end{split}\notag
	\end{equation}
	
	Since $n_{5} \geq n_{3}$, we have $n_{5}-n_{3}+2>0$. It is clear that  $K_{n_{4},n_{6}+1}$ is a proper subgraph of $F$, we obtain $\rho(F)>\rho(K_{n_{4},n_{6}+1})=\sqrt{n_{4}(n_{6}+1)}$, then $r(x)>0$ for $x \geq \rho(F)$.
	
	Thus, by Theorem \ref{3.6}, we have $\rho(F_{3})<\rho(F)$ which contradicts to the extremality of $F$.
	
	\textbf{Claim 3.} $n_{4}=1.$
	
	Now $F=G_{10} \circ (1,1,1,n_{4},n_{5},n_{6})$, we claim that $n_{6} \geq n_{4}$. If not, let $F_{4}=G_{10} \circ (1,1,1,n_{6},n_{5},n_{4})$, then
	\begin{equation}
	\begin{split}
	r(x)&=\phi(F_{4},x)-\phi(F,x)=x^{n-5}(n_{4}-n_{6})(x^{2}-x-2n_{5})(x+1).
	\end{split}\notag
	\end{equation}
	
	Since $n_{4} > n_{6}$, we have $n_{4}-n_{6} >0$. It can be seen that $K_{n_{5},1,1}$ is a proper subgraph of $F$, we obtain $\rho(F)>\rho(K_{n_{5},1,1})=(\sqrt{8n_{5}+1}+1)/2$, then $r(x)>0$ for $x \geq \rho(F)$.
	
	Thus, by Theorem \ref{3.6}, we have $\rho(F_{4})<\rho(F)$, a contradiction. Therefore $n_{6} \geq n_{4}$.
	
	Next, we assume $n_{4} \geq 2$, let $F_{5}=G_{10} \circ (1,1,1,n_{4}-1,n_{5},n_{6}+1)$ then
	\begin{equation}
	\begin{split}
	r(x)&=\phi(F_{5},x)-\phi(F,x)\\
	&=x^{n-5}(x+1)\left((n_{6}-n_{4}+2)(x^{2}-x-2n_{5}-2)+2\right).
	\end{split}\notag
	\end{equation}
	
	Since $n_{6} \geq n_{4}$, we have $n_{6}-n_{4}+2>0$. It is clear that $H \circ(n_{5},1,1,1)$ is a proper subgraph of $F$, where $H$ is shown in Figure \ref{Fig.3.2}, we obtain $\rho(F)>\rho(H \circ(n_{5},1,1,1))=(\sqrt{8n_{5}+9}+1)/2$, then $r(x)>0$ for $x \geq \rho(F)$.
	
	Thus, by Theorem \ref{3.6}, we have $\rho(F_{5})<\rho(F)$, which contradicts to the extremality of $F$.
	
	\begin{figure}[H]
		\centering
		\includegraphics{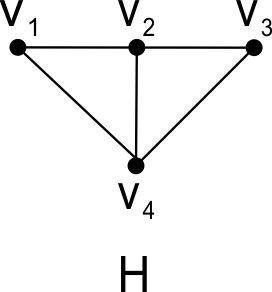}
		\caption{The graph $H$.}
		\label{Fig.3.2}
	\end{figure}
	
	\textbf{Claim 4.} $n_{6} \geq n_{5}.$
	
	Now $F=G_{10} \circ (1,1,1,1,n_{5},n_{6})$. Otherwise, let $F_{6}=G_{10} \circ (1,1,1,1,n_{6},n_{5})$ then
	\begin{equation}
	\begin{split}
	r(x)&=\phi(F_{6},x)-\phi(F,x)=x^{n-4}(x+1)^{2}(n_{5}-n_{6}).
	\end{split}\notag
	\end{equation}
	
	Since $n_{5}>n_{6}$ and $\rho(F)>0$, then $r(x)>0$ for $x \geq \rho(F)$, by Theorem \ref{3.6}, we have $\rho(F_{6})<\rho(F)$ which contradicts to the extremality of $F$, thus $n_{6} \geq n_{5}$.
	
	It follows from above four claims that the extremal graph with minimum spectral radius in $M_{n}(G_{10})$ is of the form $G_{10} \circ (1, 1, 1, 1, k, n-k-4),$ where $1 \leq k\leq (n-4)/2$.
\end{proof}

Next we determine the extremal graph with minimum spectral radius in $M_{n}(G_{7})$.

\begin{proposition}\label{g7}
	The extremal graph in $M_{n}(G_{7})$ which attains minimum spectral radius is
	$$G_{7} \circ (\lceil \frac{n-3}{2} \rceil, 1, \lfloor \frac{n-3}{2} \rfloor, 1,1).$$
\end{proposition}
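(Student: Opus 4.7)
The plan is to follow the template of Propositions \ref{g1} and \ref{g10}: compute the quotient matrix of the natural equitable partition of $F = G_{7}\circ(n_{1},\ldots,n_{5})$, and apply Theorem \ref{3.6} by comparing characteristic polynomials. Since $G_{7}=C_{5}$, the $5\times 5$ quotient matrix $B$ has $(i,j)$-entry $n_{j}$ whenever $v_{i}v_{j}\in E(C_{5})$ and $0$ otherwise; by Theorem \ref{3.5} we have $\rho(F)=\rho(B)$ and, since $R(F)=5$, $\phi(F,x)=x^{n-5}\phi(B,x)$.

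The first subtlety is that $C_{5}$ is vertex-transitive with automorphism group the dihedral group of order $10$, so cyclically rotating or reflecting $(n_{1},\ldots,n_{5})$ produces an isomorphic graph; this is used freely to cut down cases. Crucially, no non-adjacent pair in $C_{5}$ satisfies the neighborhood-containment hypothesis of Theorem \ref{3.3}, so — unlike in $M_{n}(G_{1})$ and $M_{n}(G_{10})$ — no $n_{i}$ can be forced to $1$ via that theorem, and every reduction must come from Theorem \ref{3.6}.

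I would then prove two blocks of claims. The first block asserts that, after choosing a $D_{5}$-representative, three of the coordinates equal $1$, say $n_{2}=n_{4}=n_{5}=1$. For each position $v_{i}$ with $n_{i}\geq 2$, I define $F'$ by moving one vertex from $V_{i}$ to an independent-set $V_{j}$ at a suitable non-adjacent position, expand $\phi(F',x)-\phi(F,x)$, and factor the resulting polynomial. Positivity on $\{x\geq \rho(F)\}$ would then follow by bounding $\rho(F)$ from below by the spectral radius of an explicit proper subgraph of $F$ — typically a complete bipartite or complete multipartite graph whose spectrum is available via Theorem \ref{cm} — and the conclusion $\rho(F')<\rho(F)$ from Theorem \ref{3.6} contradicts the extremality of $F$. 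The second block shows that once $n_{2}=n_{4}=n_{5}=1$, the two remaining parts must be as balanced as possible: if $n_{1}\geq n_{3}+2$, comparing $F$ with $F''=G_{7}\circ(n_{1}-1,1,n_{3}+1,1,1)$ via the same recipe drives $\rho(F'')<\rho(F)$, forcing $\{n_{1},n_{3}\}=\{\lceil(n-3)/2\rceil,\lfloor(n-3)/2\rfloor\}$.

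The main obstacle will be the first block. The characteristic polynomial $\phi(B,x)$ of a $C_{5}$-blow-up has more mixed terms than in the $G_{1}$ and $G_{10}$ cases, so the difference $\phi(F',x)-\phi(F,x)$ has to be arranged carefully; in particular, the choice of the target position $v_{j}$ for the moved vertex must be made so that the difference factors with a clearly non-negative leading factor on $\{x\geq \rho(F)\}$. Because there is no initial reduction via Theorem \ref{3.3}, one has to run the comparison for every potentially large $n_{i}$, and the dihedral symmetry of $C_{5}$ is essential for keeping the case analysis finite and symmetric.
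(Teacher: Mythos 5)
Your skeleton is the right one and matches the paper's: pass to the quotient matrix of the natural equitable partition of $F=G_{7}\circ(n_{1},\dots,n_{5})$, use Theorem \ref{3.5} to get $\rho(F)=\rho(B)$ and $\phi(F,x)=x^{n-5}\phi(B,x)$, and drive every reduction by comparing characteristic polynomials via Theorem \ref{3.6}. Your observation that Theorem \ref{3.3} is unusable here (no non-adjacent pair of $C_{5}$ has nested neighborhoods) correctly identifies why this case differs from $M_{n}(G_{1})$ and $M_{n}(G_{10})$, and the target form (three multiplicities equal to $1$, the two large parts on a non-adjacent pair and balanced) is correct.

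The gap is in your ``first block''. You propose to force $n_{2}=n_{4}=n_{5}=1$ by moving single vertices to ``suitable non-adjacent positions'' and factoring $\phi(F',x)-\phi(F,x)$, but the sign of that difference depends on the relative order of the multiplicities, and the dihedral symmetry (only $10$ elements) cannot normalize all five coordinates into a monotone order. The paper inserts a preliminary step you are missing: it first compares $F$ with coordinate-\emph{permuted} copies (swapping $n_{2}\leftrightarrow n_{3}$, then $n_{3}\leftrightarrow n_{4}$). Because the degree-one and constant coefficients of $\phi(B,x)$ are symmetric in $n_{1},\dots,n_{5}$, such a swap changes only the $x^{3}$ coefficient, so the difference collapses to a single monomial like $x^{n-2}(n_{1}-n_{4})(n_{2}-n_{3})$ with immediate sign; this yields the ordering $n_{1}\ge n_{3}\ge n_{4}\ge n_{5}\ge n_{2}$, and it is precisely this ordering that makes the leading factors (e.g.\ $n_{3}-n_{4}+1$) in the subsequent vertex-moving comparisons nonnegative. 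Without it, a move such as $V_{1}\to V_{3}$ when $n_{1}\le n_{3}$ \emph{increases} the spectral radius, so ``suitable'' cannot be chosen uniformly. Note also that the moves the paper actually uses to equalize $n_{2},n_{5}$ and to kill $n_{4}$ go to \emph{adjacent} positions ($V_{5}\to V_{1}$, shifting $n_{5}-n_{2}$ vertices at once and requiring an estimate on the largest root of the derivative of the difference polynomial, and $V_{4}\to V_{3}$); only the final balancing step $V_{1}\to V_{3}$ fits your single-vertex, non-adjacent template. As written, the hardest part of the argument is exactly the part you defer.
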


\begin{proof}
	Suppose that the extremal graph in $M_{n}(G_{7})$ which attains minimum spectral radius is of the form $F=G_{7} \circ (n_{1},n_{2},n_{3},n_{4},n_{5})$. Similarlly, we obtain
	\begin{align} B=\left(\begin{array}{ccccc}
	0 &n_{2} &0 &0 &n_{5} \\
	n_{1} &0 &n_{3} &0 &0  \\
	0 &n_{2} &0 &n_{4} &0 \\
	0 &0 &n_{3} &0 &n_{5} \\
	n_{1} &0 &0 &n_{4} &0\\
	\end{array}\right),\notag
	\end{align}
	is the quotient matrix of an equitable partition of $A(F)$ and the characteristic polynomial of $B$ is:
	\begin{align*}
	\phi(B,x)&=x^{5}-(n_{1}n_{2}+n_{2}n_{3}+n_{1}n_{5}+n_{3}n_{4}+n_{4}n_{5})x^{3}+(n_{1}n_{2}n_{3}n_{4}+\\
	&n_{1}n_{2}n_{3}n_{5}+n_{1}n_{2}n_{4}n_{5}+n_{1}n_{3}n_{4}n_{5}+n_{2}n_{3}n_{4}n_{5})x-2n_{1}n_{2}n_{3}n_{4}n_{5}.
	\end{align*}
	
	Since $R(A(F))=5$, by Theorem \ref{3.5} we have $\phi(F,x)=x^{n-5} \phi(B,x)$ and $\rho(F)=\rho(A(F))=\rho(B)$.
	
	Without loss of generality, we may suppose that $n_{1}=\text{max}\{n_{i},i=1,2,3,4,5\}$ and $n_{2} \leq n_{5}$, then we have the following claims.
	
	\textbf{Claim 1.} $n_{2} \leq n_{3}$ and $n_{5} \leq n_{4}$.
		
	Suppose that $n_{2}>n_{3}$. Let $F_{1}=G_{7} \circ (n_{1},n_{3},n_{2},n_{4},n_{5})$, then
	\begin{equation*}
	r(x)=\phi(F_{1},x)-\phi(F,x)=x^{n-2}(n_{1}-n_{4})(n_{2}-n_{3}).
	\end{equation*}
		
	Since $n_{1}=\text{max}\{n_{i},i=1,2,3,4,5\}$, we have $n_{1}\geq n_{4}$. And if $n_{1}=n_{4}$, then $F_{1} \cong F$. Thus, without loss of generality, we may suppose that $n_{1}>n_{4}$.
		
	Since $n_{2}>n_{3}$, $n_{1}>n_{4}$ and $\rho(F)>0$, then $r(x)>0$ for $x \geq \rho(F)$. By Theorem \ref{3.6}, we have $\rho(F_{1})<\rho(F)$ which contradicts to the extremality of $F$.
		
	Similarly, we obtain $n_{5} \leq n_{4}$.
		
	\textbf{Claim 2.} $n_{4} \leq n_{3}$.
	
	Suppose to the contrary that $n_{4}>n_{3}$. Let $F_{2}=G_{7} \circ (n_{1},n_{2},n_{4},n_{3},n_{5})$, then
	\begin{equation*}
	r(x)=\phi(F_{2},x)-\phi(F,x)=x^{n-2}(n_{2}-n_{5})(n_{3}-n_{4}).
	\end{equation*}
	
	Since $n_{5}\geq n_{2}$ and if $n_{5}=n_{2}$, then $F_{2} \cong F$. Thus without loss of generality we may suppose that $n_{5}>n_{2}$.
	
	Since $n_{4}>n_{3}$, $n_{5}>n_{2}$ and $\rho(F)>0$. Then $r(x)>0$ for $x \geq \rho(F)$. By Theorem \ref{3.6}, we have $\rho(F_{2})<\rho(F)$ which contradicts to the extremality of $F$.

    From above two claims, we have $n_{1}\geq n_{3}\geq n_{4}\geq n_{5}\geq n_{2}$. Next, we will prove $n_{2}=n_{4}=n_{5}=1$ and $n_{1}-n_{3}\leq 1$.

	\textbf{Claim 3.} $n_{2}=n_{5}$
	
	Assume $n_{2}<n_{5}$, let $F_{3}=G_{7} \circ (n_{1}+n_{5}-n_{2},n_{2},n_{3},n_{4},n_{2})$ then
	\begin{align*}
	r(x)&=\phi(F_{3},x)-\phi(F,x)\\
	&=x^{n-5}(n_{5}-n_{2})((n_{1}-2n_{2}+n_{4})x^{3}-(n_{1}-n_{2})((n_{3}n_{4}+n_{2}n_{3}+n_{2}n_{4})x\\
	&-2n_{2}n_{3}n_{4})) \\
	&\geq x^{n-5}(n_{5}-n_{2})(n_{1}-n_{2})\left(x^{3}-(n_{3}n_{4}+n_{2}n_{3}+n_{2}n_{4})x+2n_{2}n_{3}n_{4}\right) \\
	&=x^{n-5}(n_{5}-n_{2})(n_{1}-n_{2})g(x).
	\end{align*}
	
	Since $n_{1}\geq n_{3} \geq n_{4} \geq n_{5}>n_{2}\geq1$, we have $n_{1}-n_{2}>0$ and $n_{5}-n_{2}>0$. It is clear that $K_{n_{3},n_{2}+n_{4}}$ is a proper subgraph of $F$, we obtain $\rho(F)>\rho(K_{n_{3},n_{2}+n_{4}})=\sqrt{n_{3}(n_{2}+n_{4})}$.
	
	Since $g(\sqrt{n_{3}(n_{2}+n_{4})})>0$ and $\sqrt{n_{3}(n_{2}+n_{4})}>\sqrt{(n_{3}(n_{2}+n_{4})+n_{2}n_{4})/3}$, where $\sqrt{\left(n_{3}(n_{2}+n_{4})+n_{2}n_{4}\right)/3}$ is the largest root of $g'(x)$, we have $r(x)>0$ for $x \geq \rho(F)$.
	
	Thus, by Theorem \ref{3.6}, we have $\rho(F_{3})<\rho(F)$ which contradicts to the extremality of $F$.
	
	Note that $G_{7} \circ (n_{1},n_{2},n_{3},n_{4},n_{5}) \cong G_{7} \circ (n_{1},n_{2},n_{4},n_{3},n_{5})$ when $n_{2}=n_{5}$, therefore without loss of generality we may suppose that $n_{3} \geq n_{4}.$
	
	\textbf{Claim 4.} $n_{4}=1.$
	
	Assume $n_{4}\geq 2$, let $F_{4}=G_{7} \circ (n_{1},n_{2},n_{3}+1,n_{4}-1,n_{5})$ then
	\begin{equation}
	\begin{split}
	r(x)&=\phi(F_{4},x)-\phi(F,x)\\
	&=x^{n-5}(x-n_{5})(n_{3}-n_{4}+1)(x^{2}+n_{5}x-2n_{1}n_{5}).
	\end{split}\notag
	\end{equation}
	
	Since $n_{1} \geq n_{3}\geq n_{4}\geq n_{5}=n_{2}$, we have $n_{3}-n_{4}+1>0$. It can be seen that $K_{n_{1},2n_{5}}$ is a proper subgraph of $F$, we obtain $\rho(F)>\rho(K_{n_{1},2n_{5}})=\sqrt{2n_{1}n_{5}}>n_{5}$, then $r(x)>0$ for $x \geq \rho(F)$.
	
	Thus, by Theorem \ref{3.6}, we have $\rho(F_{4})<\rho(F)$ which contradicts to the extremality of $F$, therefore $n_{4}=1$ and hence $n_{2}=n_{5}=1$.
	
	\textbf{Claim 5.} $n_{1}-n_{3}\leq 1.$
	
	Now $F=G_{7} \circ (n_{1},1,n_{3},1,1)$. Assume $n_{1}\geq n_{3}+2$, let $F_{5}=G_{7} \circ (n_{1}-1,1,n_{3}+1,1,1)$ then
	\begin{equation}
	\begin{split}
	r(x)&=\phi(F_{5},x)-\phi(F,x)=x^{n-5}(3x-2)(n_{1}-n_{3}-1).
	\end{split}\notag
	\end{equation}
	
	Since $n_{1} \geq n_{3}+2$, we have $n_{1}-n_{3}-1>0$. It is clear that $K_{n_{1},2}$ is a proper subgraph of $F$, we obtain $\rho(F)>\rho(K_{n_{1},2})=\sqrt{2n_{1}}>1$, then $r(x)>0$ for $x \geq \rho(F)$.
	
	Thus, by Theorem \ref{3.6}, we have $\rho(F_{5})<\rho(F)$ which contradicts to the extremality of $F$, therefore $n_{1}-n_{3}\leq 1$ and hence $n_{1}=\lceil(n-3)/2 \rceil, n_{3}=\lfloor (n-3)/2 \rfloor.$
	
	From above five claims, we obtain $G_{7} \circ (\lceil \frac{n-3}{2} \rceil, 1, \lfloor \frac{n-3}{2} \rfloor, 1,1)$ attains the minimum spectral radius in $M_{n}(G_{7})$.
\end{proof}

\subsection{Step 3}

We first prove that the extremal graph with minimum spectral radius in $M_{n}(G_{1},G_{7})$ must be in $M_{n}(G_{1})$ by the following lemma.

\begin{lemma}\label{3.7}
	For $n \geq 8$, we have $\rho(G_{1} \circ (1,1,1,\lfloor \frac{n-3}{2} \rfloor, \lceil \frac{n-3}{2} \rceil )) < \rho(G_{7} \circ (\lceil \frac{n-3}{2} \rceil, 1, \lfloor \frac{n-3}{2} \rfloor, 1,1))$.
\end{lemma}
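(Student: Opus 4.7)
The plan is to apply Theorem \ref{3.6} by comparing characteristic polynomials. Set $a=\lfloor \tfrac{n-3}{2} \rfloor$ and $b=\lceil \tfrac{n-3}{2} \rceil$, so that $a\le b$, $a+b=n-3$ and $b\ge 3$ whenever $n\ge 8$. Let $F_1:=G_{1}\circ(1,1,1,a,b)$ and $F_2:=G_{7}\circ(b,1,a,1,1)$. Using the equitable partitions and quotient matrices $B_1,B_2$ displayed in the proofs of Propositions \ref{g1} and \ref{g7}, one has $\phi(F_i,x)=x^{n-5}\phi(B_i,x)$, so it is enough to compare two explicit degree-$5$ polynomials. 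A direct subtraction cancels the $x^5$ terms and yields
\begin{equation*}
\phi(F_1,x)-\phi(F_2,x)=x^{n-5}\,h(x),\qquad h(x):=(b-1)x^3-2ax^2-abx+4ab.
\end{equation*}
By Theorem \ref{3.6} it suffices to prove that $h(x)>0$ for every $x\ge\rho(F_2)$.

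To bound $\rho(F_2)$ from below, I would observe that $V_1\cup V_2\cup V_5$ induces a copy of $K_{b,2}$ in $F_2$ (both single vertices of $V_2$ and $V_5$ are adjacent to every vertex of $V_1$, while $v_2v_5\notin E(G_7)$), so Theorem \ref{3.1} gives $\rho(F_2)>\sqrt{2b}$. A direct substitution simplifies to
\begin{equation*}
h(\sqrt{2b})=b(2b-a-2)\sqrt{2b},
\end{equation*}
which is strictly positive for every $n\ge 8$, because $b\ge 3$ and $a\le b$ force $2b-a-2\ge b-2\ge 1$.

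The remaining, and most delicate, step is to show that $h$ stays positive throughout $[\sqrt{2b},\infty)$, not just at the left endpoint. The derivative $h'(x)=3(b-1)x^2-4ax-ab$ has a unique positive root $x_0$ at which $h$ attains its minimum on $[0,\infty)$, and a short calculation shows that $x_0\le\sqrt{2b}$ if and only if $a(b+4\sqrt{2b})\le 6b(b-1)$. Using $a\le b$, this is implied by $25b^2-92b+36\ge 0$, an inequality that holds for every $b\ge 4$; the remaining cases $b=3$ with $(a,b)\in\{(2,3),(3,3)\}$ are checked individually, and only $(a,b)=(3,3)$ (i.e.\ $n=9$) violates the inequality. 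For every pair with $x_0\le\sqrt{2b}$, $h$ is non-decreasing on $[\sqrt{2b},\infty)$ and so $h\ge h(\sqrt{2b})>0$ there. For the exceptional pair $a=b=3$, I would use the identity
\begin{equation*}
h(x_0)=\tfrac{2a}{3}\bigl(6b-x_0^{2}-bx_0\bigr),
\end{equation*}
obtained by substituting $h'(x_0)=0$ into $h$, together with $x_0=1+\tfrac{\sqrt{10}}{2}$, to compute $h(x_0)=23-5\sqrt{10}>0$. Applying Theorem \ref{3.6} then yields $\rho(F_1)<\rho(F_2)$.

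The main obstacle is the last step: the cubic $h$ can acquire two positive real roots once $a$ and $b$ are large, so pointwise positivity at $\sqrt{2b}$ alone is not enough; one must control the location of the minimum $x_0$ of $h$ relative to $\sqrt{2b}$ (or else sharpen the lower bound on $\rho(F_2)$ past the largest real root of $h$).
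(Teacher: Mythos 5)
Your proof is correct and follows essentially the same route as the paper's: subtract the two quotient-matrix characteristic polynomials to get the cubic $h$, lower-bound $\rho(F_2)$ by the spectral radius of a complete bipartite subgraph, and show $h>0$ beyond that bound by locating the largest critical point of $h$. The only substantive difference is that you treat both parities uniformly and dispose of the range $8\le n\le 12$ analytically --- including the genuinely exceptional case $n=9$, where the critical point $x_0$ exceeds $\sqrt{2b}$ and one must instead check $h(x_0)=23-5\sqrt{10}>0$ --- whereas the paper verifies $8\le n\le 12$ numerically with MATLAB and runs the analytic argument only for $n\ge 13$.
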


\begin{proof}
Let $F_{1}=G_{1} \circ (1,1,1,\lfloor \frac{n-3}{2} \rfloor, \lceil \frac{n-3}{2} \rceil )$ and $F_{2}=G_{7} \circ (\lceil \frac{n-3}{2} \rceil, 1, \lfloor \frac{n-3}{2} \rfloor, 1,1)$.

For $8\leq n \leq 12$, we use the MATLAB software to calculate the spectral radii of $F_{i}$ for $i=1,2$, as shown in the Table \ref{tab1}.

  \begin{table}[H]\tiny
	\centering
	\caption{$\rho(F_{i})$.}
	\label{tab1}
	\resizebox{\textwidth}{!}{
		\begin{tabular}{ccc}
			\hline
			\ \ \ \ \ \ \ \ \ \ $n$ \ \ \ \ \ \ \ \ \ \ & \ \ \ \ \ \ \ \ \ \ $\rho(F_{1})$ \ \ \ \ \ \ \ \ \ \ &  \ \ \ \ \ \ \ \ \ \ $\rho(F_{2})$ \ \ \ \ \ \ \ \ \ \ \\ \hline
			8 &  2.7676  &  2.9764 \\
			9 & 3.1474 &  3.2176   \\
			10 &  3.1713  & 3.4630   \\
			11 & 3.5047   &  3.6737  \\
			12 &  3.5223  & 3.8879  \\ \hline
			\end{tabular}}
\end{table}

So let us assume that $n\geq 13$.
	
	\textbf{Case 1.} $n-3=2k$ is even.
	
	In this case, $F_{1}=G_{1} \circ (1,1,1,k,k)$ and $F_{2}=G_{7} \circ (k,1,k,1,1)$, then
	\begin{equation}
	\begin{split}
	r(x)&=\phi(F_{1},x)-\phi(F_{2},x)=x^{n-5}\left((k-1)x^{3}-2kx^{2}-k^{2}x+4k^{2}\right).
	\end{split}\notag
	\end{equation}
	
	It can be seen that $K_{2k,1}$ is a proper subgraph of $F_{2}$, we obtain $\rho(F_{2})>\rho(K_{2k,1})=\sqrt{2k}$. Since $n \geq 13$, we have $r(\sqrt{2k})>0$ and $\sqrt{2k}>(2k+k\sqrt{3k+1})/3(k-1)$. Since $(2k+k\sqrt{3k+1})/3(k-1)$ is the largest root of $r'(x)$, we obtain $r(x)>0$ for $x \geq \rho(F_{2})$.
	
	Thus by Theorem \ref{3.6}, we have $\rho(F_{1})<\rho(F_{2})$.
	
	\textbf{Case 2.} $n-3=2k+1$ is odd.
	
	In this case, $F_{1}=G_{1} \circ (1,1,1,k,k+1)$ and $F_{2}=G_{7} \circ (k+1,1,k,1,1)$, then
	\begin{equation}
	\begin{split}
	r(x)&=\phi(F_{1},x)-\phi(F_{2},x)=x^{n-5}k\left(x^{3}-2x^{2}-(k+1)x+4k+\right).
	\end{split}\notag
	\end{equation}
	
	It is clear that $K_{2k+1,1}$ is a proper subgraph of $F_{2}$, we obtain $\rho(F_{2})>\rho(K_{2k+1,1})=\sqrt{2k+1}$. Since $n \geq 13$, we have $r(\sqrt{2k+1})>0$ and $\sqrt{2k+1}>(2+\sqrt{3k+7})/3$. Since $(2+\sqrt{3k+7})/3$ is the largest root of $r'(x)$, we obtain $r(x)>0$ for $x \geq \rho(F_{2})$.
	
    Thus by Theorem \ref{3.6}, we have $\rho(F_{1})<\rho(F_{2})$.
		
\end{proof}

Next we prove the extremal graph with minimum spectral radius in $M_{n}(G_{1},G_{10})$ must be in $M_{n}(G_{10})$. We need the following theorem.

\begin{theorem}\cite{12} \label{3.8}
	Let $G$ be a graph with m edges and n vertices. Then $\rho(G) \leq \sqrt{2m-n+1}$, with equality if and only if $G$ is isomorphic to the star $K_{1,n-1}$ or the  complete graph $K_{n}$.
\end{theorem}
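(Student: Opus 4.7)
The plan is to prove Theorem~\ref{3.8} by pairing the Perron eigenvector equation with the Cauchy--Schwarz inequality and a handshake-style counting estimate that uses the assumption (in force throughout the paper) that $G$ is connected, so every vertex has degree at least $1$.

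First I would let $\mathbf{x}=(x_{1},\dots,x_{n})^{T}$ be the positive unit Perron eigenvector of $A(G)$, so that $\rho x_{i}=\sum_{j\sim i}x_{j}$ and $\sum_{i}x_{i}^{2}=1$. Squaring each eigenvalue equation, summing over $i$, and applying Cauchy--Schwarz $(\sum_{j\sim i}x_{j})^{2}\le d(i)\sum_{j\sim i}x_{j}^{2}$ to each inner sum yields
\begin{equation*}
\rho^{2}=\sum_{i}\Bigl(\sum_{j\sim i}x_{j}\Bigr)^{2}\le\sum_{i}d(i)\sum_{j\sim i}x_{j}^{2}=\sum_{j}x_{j}^{2}\sum_{i\sim j}d(i).
\end{equation*}

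The crux is then a uniform bound on $\sum_{i\sim j}d(i)$ valid at every vertex $j$. Writing $\sum_{i\sim j}d(i)=(2m-d(j))-\sum_{i\ne j,\ i\not\sim j}d(i)$ and using that each non-neighbour of $j$ contributes at least $1$ to the subtracted sum (no isolated vertices), I get $\sum_{i\sim j}d(i)\le(2m-d(j))-(n-1-d(j))=2m-n+1$. Substituting and using $\sum_{j}x_{j}^{2}=1$ delivers $\rho^{2}\le 2m-n+1$, as required.

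For the equality characterisation, equality throughout forces simultaneously (a) Cauchy--Schwarz to be tight at every $i$, so $\mathbf{x}$ is constant on each neighbourhood $N(i)$, and (b) every non-neighbour of every vertex has degree exactly $1$. Condition (b) yields a clean dichotomy: either $G$ has no non-edges at all and $G=K_{n}$; or any vertex of degree $\ge 2$ must be a universal vertex (otherwise it would itself be a non-neighbour of someone while having degree $\ge 2$, contradicting (b)), and a short case analysis using connectedness pins down $G=K_{1,n-1}$. A direct verification that (a) holds for the constant Perron eigenvector of $K_{n}$ and for the eigenvector of the form $(a,b,\dots,b)^{T}$ of $K_{1,n-1}$ completes the characterisation.

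I expect the main obstacle to be the equality analysis: while the inequality itself takes only a couple of lines, ruling out intermediate extremal structures requires combining the Cauchy--Schwarz tightness condition with the degree identity and connectedness, and it is precisely the interplay of (a), (b), and connectedness that pins down exactly the two families $\{K_{n},K_{1,n-1}\}$.
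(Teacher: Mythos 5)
The paper does not prove this statement at all: it is quoted verbatim from Hong's paper (reference \cite{12} in the bibliography) and used as a black box in Lemma \ref{3.9}. Your argument is, in essence, a correct reconstruction of Hong's original proof: the chain $\rho^{2}=\sum_{i}\bigl(\sum_{j\sim i}x_{j}\bigr)^{2}\le\sum_{j}x_{j}^{2}\sum_{i\sim j}d(i)$ via Cauchy--Schwarz, followed by the counting bound $\sum_{i\sim j}d(i)\le 2m-d(j)-(n-1-d(j))=2m-n+1$, is exactly the right mechanism, and your equality analysis (every non-neighbour pair consists of degree-one vertices, hence every vertex of degree at least $2$ is universal, hence $G$ is $K_{n}$ or $K_{1,n-1}$ by connectedness) closes the characterisation correctly. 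Two small points worth making explicit if you write this up: first, the counting step genuinely needs minimum degree at least $1$ (the bound fails for, e.g., $K_{2}\cup K_{1}$), and the positivity of $x_{j}$ used to force $\sum_{i\sim j}d(i)=2m-n+1$ at \emph{every} vertex needs irreducibility of $A(G)$; both are supplied by the paper's standing connectedness assumption, as you note, but the theorem as literally stated (``Let $G$ be a graph'') is false without some such hypothesis. Second, for the converse direction of the equality case it is cleaner to simply compute $\rho(K_{n})=n-1=\sqrt{n(n-1)-n+1}$ and $\rho(K_{1,n-1})=\sqrt{n-1}=\sqrt{2(n-1)-n+1}$ directly rather than re-verifying the Cauchy--Schwarz tightness conditions.
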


\begin{lemma} \label{3.9}
  Let $G_{1} \circ(1, 1, 1,k,n-k-3)$ be the extremal graph with minimum spectral radius in $M_{n}(G_{1})$ for $n \geq 12$. Then $2\leq k \leq \frac{n-3}{2}$.
\end{lemma}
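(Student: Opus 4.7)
The plan is to prove $k\ne 1$ by exhibiting a graph in $M_n(G_1)$ with $k=2$ that has strictly smaller spectral radius than the $k=1$ candidate, for every $n\ge 12$. Concretely, set
\[
F_1:=G_1\circ(1,1,1,1,n-4),\qquad F_2:=G_1\circ(1,1,1,2,n-5),
\]
so that, combined with Proposition~\ref{g1}, the target becomes $\rho(F_2)<\rho(F_1)$. Using the quotient-matrix formula from the proof of Proposition~\ref{g1} and subtracting the two characteristic polynomials yields the clean identity
\[
\phi(F_1,x)-\phi(F_2,x)=x^{n-5}\,g(x),\qquad g(x):=x^3+2x^2-2(n-6)(x+1),
\]
so the conclusion $\rho(F_2)<\rho(F_1)$ reduces to establishing $\phi(F_2,x)>0$ for every $x\ge\rho(F_1)$.

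I would split the argument at the threshold $\alpha:=\sqrt{n-5/2}$. On $[\rho(F_1),\alpha]$ the inequality $\phi(F_2,x)>0$ follows from $\phi(F_1,x)\ge 0$ together with $g(x)<0$, since $\phi(F_2,x)=\phi(F_1,x)-x^{n-5}g(x)$. On $[\alpha,\infty)$ the inequality $\phi(F_2,x)>0$ follows from $\rho(F_2)<\alpha$.

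To control $\rho(F_1)$, I would factor $\phi(F_1,x)/x^{n-5}=(x+1)q(x)$ with $q(x)=x^4-x^3-(n-1)x^2+(n-3)x+2(n-4)$, then substitute $x^2=n-3+c$ to derive the identity
\[
q\bigl(\sqrt{n-3+c}\bigr)=c\bigl(n-5+c-\sqrt{n-3+c}\bigr)-2.
\]
The choice $c=1/2$ makes this positive for $n\ge 12$ (reducing to $n^2-18n+74.75>0$), and combined with $q'>0$ on $[\alpha,\infty)$ yields $\rho(F_1)<\alpha$; the matching lower bound $\rho(F_1)\ge\sqrt{n-3}$ is immediate from the induced star $K_{1,n-3}$ at the $V_3$-vertex. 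For the sign of $g$, an elementary check shows the unique positive critical point of $g$ lies below $\sqrt{n-3}$ for every $n\ge 12$, so $g$ is strictly increasing on $[\sqrt{n-3},\alpha]$, and the right endpoint value $g(\alpha)=7-(n-19/2)\sqrt{n-5/2}$ is negative for $n\ge 12$. The remaining bound $\rho(F_2)<\alpha$ is obtained by running exactly the same calculation with $\phi(F_2,x)/x^{n-5}$ at $x=\alpha$, where the value $\tfrac{3}{2}(n-19/2)\sqrt{n-5/2}-10$ is positive for $n\ge 12$ and monotonicity above $\alpha$ supplies the rest.

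The main obstacle is that every scalar inequality is razor-thin at $n=12$ — margins of order one on quantities of size $\Theta(n)$ — so the threshold constant $5/2$ in $\alpha$ has to be chosen with some care, and coarser thresholds simply fail to separate $\rho(F_1)$ and $\rho(F_2)$ at the boundary. In parallel with the tabulation used in Lemma~\ref{3.7}, one plausible presentation is to verify $n=12$ and $n=13$ by direct numerical computation and invoke the polynomial-bound argument above only for $n\ge 14$, where every step has comfortable slack.
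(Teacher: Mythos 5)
Your proposal is correct, and it takes a genuinely different route from the paper. The paper proves $k\ne 1$ in two pieces: for $12\le n\le 18$ it simply tabulates $\rho(F_k)$ numerically, and for $n\ge 19$ it works with the principal eigenvector $\bm{x}$ of $F_2=G_1\circ(1,1,1,2,n-5)$, using the eigenvalue equations together with Hong's bound $\rho(F_2)\le\sqrt{2m-n+1}=\sqrt{n+3}<(n-9)/2$ (which is exactly why $n\ge 19$ is needed) to deduce $x_3>x_1+x_2$, and then a Rayleigh-quotient comparison to get $\rho(F_1)>\rho(F_2)$. You instead give a single polynomial argument uniform in $n\ge 12$: I checked that your identity $\phi(F_1,x)-\phi(F_2,x)=x^{n-5}\bigl(x^3+2x^2-2(n-6)(x+1)\bigr)$, the factorization $\phi(F_1,x)/x^{n-5}=(x+1)q(x)$, the substitution formula $q(\sqrt{n-3+c})=c(n-5+c-\sqrt{n-3+c})-2$, and the three endpoint evaluations at $\alpha=\sqrt{n-5/2}$ (namely $n^2-18n+74.75>0$, $g(\alpha)=7-(n-\tfrac{19}{2})\sqrt{n-\tfrac52}<0$, and $\phi(B_2,\alpha)=\tfrac32(n-\tfrac{19}{2})\sqrt{n-\tfrac52}-10>0$) are all correct for $n\ge 12$, with the tightest margin at $n=12$ where $\rho(F_1)\approx 3.0751<\alpha\approx 3.0822$. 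The monotonicity claims you leave implicit ($q'$, $g'$, and $\phi(B_2)'$ positive on the relevant intervals) do hold and follow from routine second- and third-derivative checks at $\alpha$. Your observation that the Li--Feng comparison theorem cannot be applied directly here -- because the difference polynomial changes sign above $\rho(F_1)$ -- is presumably the reason the paper abandoned the polynomial method for this lemma; your two-interval workaround (sign of $g$ below $\alpha$, location of $\rho(F_2)$ above $\alpha$) repairs it. The trade-off: your argument eliminates the computer verification for $12\le n\le 18$ and the case split at $n=19$, at the price of several razor-thin scalar inequalities whose constants must be stated and checked carefully; the paper's eigenvector argument is conceptually lighter but only kicks in for large $n$.
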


\begin{proof}
    We denote $F_k=G_{1} \circ(1, 1, 1,k,n-k-3)$ for convenience. By Proposition \ref{g1}, we have $1\leq k \leq (n-3)/2$.

    For $12 \leq n \leq 18$, we use the MATLAB software to calculate the spectral radii of $F_{k}$, as shown in the Table \ref{tab2}, where the minimum spectral radius is bolded.
    \begin{table}[H]\scriptsize
    	\centering
    	\caption{$\rho(F_k)$.}
    	\label{tab2}
    	\resizebox{\textwidth}{!}{
    		\begin{tabular}{c|ccccccc}
    			\hline
    			\diagbox{$n$}{$\rho(F_k)$}{$k$} & 1 & 2 & 3 & 4 & 5 & 6 & 7 \\ \hline
    			12 &  3.0751 & \textbf{3.0649}  & 3.2427 &  3.5223 &  \textbackslash{} & \textbackslash{} & \textbackslash{} \\
    			13 & 3.2229 &  \textbf{3.1791}  &3.2951  & 3.5443& 3.8231& \textbackslash{} & \textbackslash{} \\
    			14 & 3.3668 & \textbf{3.3013}  &3.3616  &3.5722& 3.8368& \textbackslash{} & \textbackslash{} \\
    			15 & 3.5064&\textbf{3.4274} &3.4422 & 3.6076& 3.8535  &4.1131& \textbackslash{} \\
    			16 & 3.6418  & 3.5544& \textbf{3.5353}& 3.6526 & 3.8742& 4.1243 & \textbackslash{} \\
    			17 & 3.7731 & 3.6807 & \textbf{3.6377} & 3.7088& 3.8998& 4.1376  & 4.3813 \\
    			18 &3.9006 & 3.8053& \textbf{3.7459} & 3.7767& 3.9318 &4.1536& 4.3906\\ \hline
    			
    	\end{tabular}}
    	
    \end{table}

	For $n \geq 19$, note that $F_{1}=G_{1}\circ (1,1,1,1,n-4), F_{2}=G_{1}\circ (1,1,1,2,n-5)$, let \bm{$x$} be the principal eigenvector of $F_{2}$ and $x_{i}$ correspond to vertices in $V_{i}$ for $i=1,2,3,4,5.$ By $\rho(F_{2})\bm{x}=A(F_{2})\bm{x}$, we have
	\begin{align}
	\rho(F_{2})x_{1}&=x_{2}+x_{3}+2x_{4}, \label{eq3.1}\\
	\rho(F_{2})x_{2}&=x_{1}+2x_{4}, \label{eq3.2}\\
	\rho(F_{2})x_{3}&=x_{1}+(n-5)x_{5}, \label{eq3.3}\\
	\rho(F_{2})x_{4}&=x_{1}+x_{2}, \label{eq3.4}\\
	\rho(F_{2})x_{5}&=x_{3}, \label{eq3.5}
	\end{align}

	From $(\ref{eq3.1})$-$(\ref{eq3.3})$, we have
	\begin{equation*}
	\rho(F_{2})(x_{3}-x_{1}-x_{2})=x_{1}+(n-5)x_{5}-x_{2}-x_{3}-2x_{4}-x_{1}-2x_{4},
	\end{equation*}
    multiplying $\rho(F_{2})$ on both sides, by ($\ref{eq3.4}$) and ($\ref{eq3.5}$), yields
	\begin{equation*}
    \rho(F_{2})^{2}(x_{3}-x_{1}-x_{2})=(n-5)x_{3}-\rho(F_{2})x_{3}-\rho(F_{2})x_{2}-4(x_{1}+x_{2}),
	\end{equation*}
	then
	\begin{equation} \label{eq3.6}
    (\rho(F_{2})^{2}-\rho(F_{2})-4)(x_{3}-x_{1}-x_{2})=(n-9-2\rho(F_{2}))x_{3}+\rho(F_{2})x_{1}.
	\end{equation}

	Since $n \geq 19$ and $K_{n-5,1}$ is a proper subgraph of $F_{2}$, we have $\rho(F_{2})>\rho(K_{n-5,1})=\sqrt{n-5}>3$, thus $\rho(F_{2})^{2}-\rho(F_{2})-4>0$. By Theorem \ref{3.8} and $n\geq 19$, we obtain $\rho(F_{2})<\sqrt{2m(F_{2})-n+1}=\sqrt{2(n+1)-n+1}=\sqrt{n+3}<(n-9)/2$, therefore $n-9-2\rho(F_{2})>0$. Since \bm{$x$} is the principal eigenvector of $F_{2}$, we have $x_{i}>0$.
	
	Thus, it follows from (\ref{eq3.6}) that $x_{3}-x_{1}-x_{2}>0$.
	
	Now we have
	\begin{equation}
	\begin{split}
	\rho(F_{1})-\rho(F_{2}) &\geq \bm{x}^{T}A(F_{2})\bm{x}-\bm{x}^{T}A(F_{1})\bm{x} \\
	&=2x_{4}x_{3}-2x_{4}(x_{1}+x_{2}) \\
	&=2x_{4}(x_{3}-x_{1}-x_{2})>0.
	\end{split}\notag
	\end{equation}
	
	Therefore, $\rho(F_{1})>\rho(F_{2})$, which means $k\geq 2$.
	
\end{proof}

Now we prove that $\rho(G_{10} \circ (1,1,1,1,k-1,n-k-3))<\rho(G_{1} \circ(1, 1, 1,k,n-k-3))$ for $k\geq 2$ and $n\geq 12$ by using a well-known operation.

\begin{theorem}\cite{08} \label{3.10}
	Let $v_{1}, v_{2}$ be two vertices of a connected graph $G$ and let $\{u_{1}, u_{2}, \dots, u_{t}\} \subseteq N(v_{1})\setminus N(v_{2})$. Let $G'$ be the graph obtained from $G$ by rotating the edge $v_{1}u_{i}$ to $v_{2}u_{i}$ for $i=1,2,\dots,t$. If $x_{v_{1}} \leq x_{v_{2}}$, where \textbf{x} is the principal eigenvector of $G$, then $\rho(G')>\rho(G)$.
\end{theorem}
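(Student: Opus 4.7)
The plan is to use the Rayleigh--Ritz characterization of the spectral radius, applied to the principal eigenvector of $G$ itself as a trial vector for $A(G')$. Let $\mathbf{x}$ be the principal unit eigenvector of $G$, so $\rho(G) = \mathbf{x}^T A(G)\mathbf{x}$, and by Perron--Frobenius all entries $x_v > 0$. The operation of rotating each edge $v_1u_i$ to $v_2u_i$ changes $A(G)$ only in the entries indexed by $(v_1,u_i),(u_i,v_1),(v_2,u_i),(u_i,v_2)$ for $i=1,\dots,t$; concretely, $A(G')-A(G)$ subtracts $1$ from the first pair and adds $1$ to the second pair.

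First I would compute the quadratic-form difference directly:
\begin{equation*}
\mathbf{x}^T A(G')\mathbf{x} - \mathbf{x}^T A(G)\mathbf{x} = 2\sum_{i=1}^{t} x_{u_i}\bigl(x_{v_2}-x_{v_1}\bigr).
\end{equation*}
By hypothesis $x_{v_2}\geq x_{v_1}$ and each $x_{u_i}>0$, so this quantity is $\geq 0$. Applying the Rayleigh principle to the symmetric matrix $A(G')$ then gives
\begin{equation*}
\rho(G') \;\geq\; \mathbf{x}^T A(G')\mathbf{x} \;\geq\; \mathbf{x}^T A(G)\mathbf{x} \;=\; \rho(G),
\end{equation*}
which already delivers the non-strict inequality.

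Next I would upgrade this to a strict inequality. The natural approach is to argue by contradiction: suppose $\rho(G')=\rho(G)$. Then both inequalities above are equalities, so $\mathbf{x}$ is a unit vector attaining the maximum of $\mathbf{y}^T A(G')\mathbf{y}$ over unit $\mathbf{y}$, forcing $\mathbf{x}$ to be an eigenvector of $A(G')$ with eigenvalue $\rho(G')=\rho(G)$. Subtracting the two eigenvector equations yields $\bigl(A(G')-A(G)\bigr)\mathbf{x}=\mathbf{0}$. But direct computation shows this vector has $v_2$-coordinate equal to $+\sum_{i=1}^{t} x_{u_i}$, which is strictly positive since $t\geq 1$ and each $x_{u_i}>0$. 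This contradiction forces $\rho(G')>\rho(G)$.

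The only delicate point, and what I expect to be the main obstacle, is the strict inequality in the case $x_{v_1}=x_{v_2}$, where the quadratic-form comparison is tight and does not by itself produce strictness. The eigenvector-equality argument above handles it cleanly, but one should be careful that it does not presuppose connectedness of $G'$; the Rayleigh maximiser argument works for any symmetric matrix, and the contradiction comes purely from the fact that $(A(G')-A(G))\mathbf{x}$ has a nonzero coordinate forced by the strict positivity of the Perron entries $x_{u_i}$ of the original graph $G$. Once this is observed, no case analysis on $G'$ is needed.
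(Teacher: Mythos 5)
This theorem is quoted from Nikiforov \cite{08} and the paper supplies no proof of its own, so the benchmark is the standard argument --- which is exactly what you give: use the Perron vector of $G$ as a trial vector in the Rayleigh quotient for $A(G')$ to get $\rho(G')\geq\rho(G)$, then rule out equality by observing that equality would force $\mathbf{x}$ to be a $\rho(G')$-eigenvector of $A(G')$, whence $(A(G')-A(G))\mathbf{x}=\mathbf{0}$, contradicting the strictly positive $v_{2}$-coordinate $\sum_{i=1}^{t}x_{u_{i}}>0$; your handling of the case $x_{v_{1}}=x_{v_{2}}$ and of possible disconnectedness of $G'$ is correct. The only caveat is inherited from the statement rather than from your proof: one must tacitly assume $u_{i}\neq v_{2}$ for each $i$ (otherwise the rotation would create a loop), and your computation of the $v_{2}$-coordinate of $(A(G')-A(G))\mathbf{x}$ implicitly uses this.
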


\begin{lemma}\label{3.11}
	For $k\geq 2$ and $n\geq 12$, we have $\rho(G_{10} \circ (1,1,1,1,k-1,n-k-3))<\rho(G_{1} \circ(1, 1, 1,k,n-k-3))$.
\end{lemma}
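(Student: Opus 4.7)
My plan is to apply the characteristic-polynomial comparison of Theorem~\ref{3.6} to the two graphs $A:=G_1\circ(1,1,1,k,n-k-3)$ and $B:=G_{10}\circ(1,1,1,1,k-1,n-k-3)$, rather than to invoke the rotation operation of Theorem~\ref{3.10} directly. Both graphs have rank $5$, and the characteristic polynomials of the quotient matrices of their natural equitable partitions have already been written down explicitly in the proofs of Propositions~\ref{g1} and~\ref{g10}; so the main ingredients are on hand and the argument becomes essentially algebraic.

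First, I would substitute $(n_2,n_3,n_4,n_5)=(1,1,k,n-k-3)$ into the $G_1$-quotient polynomial and $(n_2,n_3,n_4,n_5,n_6)=(1,1,1,k-1,n-k-3)$ into the $G_{10}$-quotient polynomial. Using Theorem~\ref{3.5} and absorbing the extra factor of $x$ that appears in the $G_{10}$ case, both $\phi(A,x)$ and $\phi(B,x)$ become $x^{n-5}$ times a monic degree-$5$ polynomial in $x$; the coefficients of $x^5$, $x^3$, and $x^2$ already agree, being $1$, $-(n+k-1)$, and $-2k$ respectively.

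Second, I would take the coefficient-by-coefficient difference of what remains. The decisive simplification is that the $x$-coefficient and the constant term each contain a $2(n-k-3)$-piece that enters with the factor $(k-1)-k=-1$ upon subtraction, causing almost all of the mass to cancel. A short calculation should then yield
\[
\phi(B,x)-\phi(A,x)=(k-1)(x+2)\,x^{n-5}.
\]
For $k\geq 2$ and $x>0$ this is strictly positive, so $\phi(B,x)>\phi(A,x)$ for every $x\geq \rho(A)$, and Theorem~\ref{3.6} immediately yields $\rho(B)<\rho(A)$, which is what we want.

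The main (and essentially only) obstacle is the bookkeeping in the coefficient subtraction: Proposition~\ref{g10}'s quotient polynomial contains eight triple and quadruple products of the $n_i$'s, and one has to check carefully which summands collapse under the specialised substitution. The hypothesis $n\geq 12$ plays no role in this argument; it is inherited only because Lemma~\ref{3.9} has already restricted the relevant range to $k\geq 2$, which is precisely what the identity $\phi(B,x)-\phi(A,x)=(k-1)(x+2)\,x^{n-5}$ needs in order to be positive.
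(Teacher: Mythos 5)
Your proposal is correct, and I have checked the decisive identity: substituting $(n_2,n_3,n_4,n_5)=(1,1,k,n-k-3)$ into the $G_1$-quotient polynomial and $(n_2,\dots,n_6)=(1,1,1,k-1,n-k-3)$ into the $G_{10}$-quotient polynomial, the coefficients of $x^5,x^3,x^2$ agree (being $1$, $-(n+k-1)$, $-2k$), the $x$-coefficients differ by exactly $k-1$, and the constant terms by $2(k-1)$, so indeed $\phi(B,x)-\phi(A,x)=(k-1)(x+2)x^{n-5}>0$ for $x>0$ and $k\geq 2$, and Theorem~\ref{3.6} applies. However, this is a genuinely different route from the paper's. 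The paper does not compare characteristic polynomials here at all: it takes the principal eigenvector $\bm{x}$ of $F_2=G_{10}\circ(1,1,1,1,k-1,n-k-3)$ and applies the edge-rotation result (Theorem~\ref{3.10}) twice, splitting into the cases $x_3\geq x_1$ and $x_3<x_1$; the key structural observation is that rotating $v_1v_4$ to $v_3v_4$ in the first case, or rotating the $k-1$ edges $v_3v_5^{i}$ to $v_1v_5^{i}$ in the second, both produce a graph isomorphic to $F_1=G_1\circ(1,1,1,k,n-k-3)$, so $\rho(F_2)<\rho(F_1)$ in either case with no computation. The paper's argument is shorter and avoids bookkeeping but hinges on noticing the double isomorphism; yours is mechanical but yields an exact closed form for the polynomial gap, makes the role of $k\geq 2$ completely transparent (for $k=1$ the difference vanishes identically, consistent with $F_2$ being undefined there), and confirms explicitly that the hypothesis $n\geq 12$ is not actually used — which is also true of the paper's proof. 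Either argument is acceptable.
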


\begin{proof}
	Let $F_{1}=G_{1} \circ(1, 1, 1,k,n-k-3)$ and $F_{2}=G_{10} \circ (1,1,1,1,k-1,n-k-3)$. Let \bm{$x$} be the principal eigenvector of $F_{2}$ and $x_{i}$ correspond to vertices in $V_{i}$ for $i=1,2,3,4,5,6$.
	
	Let us first suppose that $x_{3} \geq x_{1}$, then by Theorem \ref{3.10} we have $\rho(F_{2})<\rho(F')$, where $F'$ is obtained from $F_{2}$ by rotating the edge $v_{1}v_{4}$ to $v_{3}v_{4}$. Since $F' \cong F_{1}$, we obtain $\rho(F_{2})<\rho(F_{1})$.
	
	Now, suppose that $x_{3} < x_{1}$. Since $F'' \cong F_{1}$, where $F''$ is obtained from $F_{2}$ by rotating the edge $v_{3}v_{5}^{i}$ to $v_{1}v_{5}^{i}$ for $i=1,2,\dots,k-1$, we have $\rho(F_{2})<\rho(F'')=\rho(F_{1})$.
		
	Thus, we complete the proof of the Lemma.
			
\end{proof}

Now we know that the extremal graph of order $n$ and rank $5$ with minimum spectral radius is $G_{10} \circ (1,1,1,1,k,n-4-k)$ for some integer $k$ with $1\leq k\leq \frac{n-4}{2}$ when $n\geq 12$.

For convenience, we set $F_n(i)=G_{10} \circ (1,1,1,1,i,n-4-i)$ and $\mathcal{F}=\{F_n(i):1\leq i\leq \frac{n-4}{2}\}$. It is only remained to find the extremal graph with minimum spectral radius in $\mathcal{F}$.

\begin{theorem} \cite{10} \label{3.12}
	Let $A$ be an $n \times n$ nonnegative matrix. Then the largest eigenvalue $\rho(A) \geq \bm{x}^{T}A\bm{x}$ for any unit vector $\bm{x}$, with equality if and only if $A\bm{x}=\rho(A)\bm{x}$.
\end{theorem}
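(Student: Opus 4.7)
The plan is to prove Theorem~\ref{3.12} by reducing it to the classical Rayleigh quotient inequality, combining the spectral theorem with the Perron-Frobenius theorem. The matrices $A$ to which this result is applied later in the paper (adjacency matrices of graphs and their principal submatrices) are nonnegative and symmetric, so the spectral theorem provides an orthonormal basis $\{v_1,\ldots,v_n\}$ of eigenvectors with real eigenvalues $\lambda_1\geq\lambda_2\geq\cdots\geq\lambda_n$. My first step is to identify $\rho(A)$ with $\lambda_1$: by Perron-Frobenius, $\rho(A)$ is itself an eigenvalue of $A$, so $\rho(A)\in\{\lambda_1,\ldots,\lambda_n\}$; combined with $\rho(A)=\max_i|\lambda_i|\geq\lambda_1$ (which holds by definition of the spectral radius) and $\rho(A)\geq 0$, this forces $\rho(A)=\lambda_1$.

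Given this identification, for any unit vector $\bm{x}$ I expand $\bm{x}=\sum_{i=1}^n c_iv_i$ with $\sum_i c_i^2=1$ and compute
\begin{equation*}
\bm{x}^T A\bm{x} \;=\; \sum_{i=1}^n \lambda_i c_i^2 \;\leq\; \lambda_1\sum_{i=1}^n c_i^2 \;=\; \lambda_1 \;=\; \rho(A),
\end{equation*}
which is the claimed inequality.

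For the equality characterization, equality in the above bound forces $(\lambda_1-\lambda_i)c_i^2=0$ for every index $i$, so $c_i=0$ whenever $\lambda_i<\lambda_1$; thus $\bm{x}$ lies in the $\lambda_1$-eigenspace of $A$, giving $A\bm{x}=\lambda_1\bm{x}=\rho(A)\bm{x}$. Conversely, if $A\bm{x}=\rho(A)\bm{x}$ with $\|\bm{x}\|=1$, then directly $\bm{x}^T A\bm{x}=\rho(A)\,\bm{x}^T\bm{x}=\rho(A)$. The only non-routine point of the proof is the identification $\rho(A)=\lambda_1$ in the first step, which is where both the nonnegativity of $A$ (to invoke Perron-Frobenius so that $\rho(A)$ is an eigenvalue, not just the modulus of one) and the symmetry of $A$ (so that the eigenvalues are real and can be linearly ordered) are essential; once this identification is in hand, everything else is an immediate orthonormal-basis computation.
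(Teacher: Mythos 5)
The paper does not prove this statement; it is quoted from Brouwer and Haemers \cite{10} and used as a black box, so there is no internal proof to compare against. Your argument is the standard Rayleigh-quotient proof and it is correct: Perron--Frobenius to identify $\rho(A)$ with the largest eigenvalue $\lambda_1$, an orthonormal eigenbasis expansion to get $\bm{x}^T A\bm{x}=\sum_i\lambda_i c_i^2\leq\lambda_1$, and the equality analysis $(\lambda_1-\lambda_i)c_i^2=0$ to pin $\bm{x}$ to the $\lambda_1$-eigenspace. One point deserves emphasis rather than the parenthetical treatment you give it: the theorem as literally stated --- for an arbitrary nonnegative $n\times n$ matrix --- is false, since for the non-symmetric nonnegative matrix $A=\left(\begin{smallmatrix}0&1\\0&0\end{smallmatrix}\right)$ one has $\rho(A)=0$ yet $\bm{x}^TA\bm{x}=\tfrac12$ for $\bm{x}=\tfrac{1}{\sqrt2}(1,1)^T$. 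Symmetry is therefore an essential missing hypothesis, not merely a convenience; you are right that it holds for every matrix to which the paper applies the result (adjacency matrices of graphs), but your write-up should state explicitly that you are proving the corrected statement for nonnegative \emph{symmetric} matrices. With that caveat made explicit, the proof is complete.
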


\begin{lemma}\label{3.13}
Let $\alpha=\frac{6n-37-\sqrt{24n+1}}{18}$ and $n\geq 12$. Then for $1\leq i\leq \frac{n-4}{2}$, we have
$$\rho(F_n(i))>\min\{\rho(F_n(\lfloor \alpha \rfloor)), \rho(F_n(\lceil \alpha \rceil))\}$$
unless $i=\lfloor\alpha \rfloor$ or $\lceil \alpha \rceil$.
\end{lemma}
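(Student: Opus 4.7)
The plan is to compare consecutive characteristic polynomials $p_i(x)$ of $F_n(i)$ (the degree-$5$ factor of $\phi(F_n(i),x)$ isolated in the proof of Proposition~\ref{g10}) and to exploit the resulting monotonicity. A direct subtraction yields the key identity
\[
p_{i+1}(x)-p_i(x) \;=\; -(x+1)\bigl(x^{2}+x-(2n-10-4i)\bigr),
\]
so evaluating at $x=\rho_i:=\rho(F_n(i))$ gives $p_{i+1}(\rho_i)=-(\rho_i+1)T(i)$, where $T(i):=\rho_i^{2}+\rho_i-(2n-10-4i)$. Theorem~\ref{3.6} then produces the dichotomy $T(i)<0\Leftrightarrow\rho_{i+1}<\rho_i$ and $T(i)>0\Leftrightarrow\rho_{i+1}>\rho_i$. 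Moreover a quick calculation gives $T(i+1)-T(i)=(\rho_{i+1}^{2}-\rho_i^{2})+(\rho_{i+1}-\rho_i)+4\ge 4$ whenever $\rho_{i+1}\ge\rho_i$, so once $T$ becomes non-negative it stays strictly positive; hence $(\rho_i)$ is strictly decreasing and then strictly increasing, with unique discrete minimum at $i^{\ast}:=\min\{i:T(i)\ge 0\}$.

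The next step identifies $\alpha$ via the continuous analog. Treating $\alpha$ as a real variable and differentiating, one finds $\partial_{\alpha}p_{\alpha}(x)=-(x+1)\bigl(x^{2}+x-(2n-8-4\alpha)\bigr)$, so the continuous minimum of $\alpha\mapsto\rho_{\alpha}$ is characterized by $\rho_{\alpha}^{2}+\rho_{\alpha}=2n-8-4\alpha$. Substituting $\rho^{2}=2n-8-4\alpha-\rho$ repeatedly into $p_{\alpha}(\rho)=0$ collapses the quintic to a linear equation $C_{1}(\alpha)\rho+C_{0}(\alpha)=0$; demanding that the resulting value $\rho=-C_{0}/C_{1}$ also satisfy $\rho^{2}+\rho=2n-8-4\alpha$, and cancelling the factor corresponding to the spurious $\rho<0$ branch, yields
\[
9\alpha^{2}+(37-6n)\alpha+(n^{2}-13n+38)\;=\;0,
\]
whose smaller root is exactly $(6n-37-\sqrt{24n+1})/18$. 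At this $\alpha$ one checks $T(\alpha)=(2n-8-4\alpha)-(2n-10-4\alpha)=2$ and $T'(\alpha)=(2\rho_{\alpha}+1)\rho'_{\alpha}+4=4$ (because $\rho'_{\alpha}=0$ there).

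To conclude that $i^{\ast}\in\{\lfloor\alpha\rfloor,\lceil\alpha\rceil\}$, I would bracket $i^{\ast}$ by showing $T(\lceil\alpha\rceil)>0$ and $T(\lfloor\alpha\rfloor-1)<0$. For the upper bracket: $\rho_{\beta}$ is non-decreasing on $[\alpha,\infty)$, so $T'(\beta)\ge 4$ there, giving $T(\lceil\alpha\rceil)\ge T(\alpha)+4(\lceil\alpha\rceil-\alpha)>0$ and hence $i^{\ast}\le\lceil\alpha\rceil$. For the lower bracket I would secure an explicit upper bound on $\rho_{\lfloor\alpha\rfloor-1}$, the cleanest route being to evaluate $p_{\lfloor\alpha\rfloor-1}$ at a carefully chosen test value that provably exceeds any relevant spectral radius and to check its sign; this, combined with $T(\alpha)=2$ and $T'(\alpha)=4$, forces $T(\lfloor\alpha\rfloor-1)<0$, i.e.\ $i^{\ast}\ge\lfloor\alpha\rfloor$. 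The main obstacle I anticipate is the symbolic elimination producing the quadratic for $\alpha$---a bulky computation in which the extraneous root from the $\rho<0$ branch must be identified and discarded---together with the quantitative upper estimate on $\rho_{\lfloor\alpha\rfloor-1}$ in the final step, since a straightforward convexity argument supplies only a lower (not upper) bound on $T$.
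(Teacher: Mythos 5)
Your route---comparing the degree-five factors $p_i$ of consecutive characteristic polynomials and tracking the sign of $T(i)=\rho_i^2+\rho_i-(2n-10-4i)$---is genuinely different from the paper's, which never touches $\phi(F_n(i+1),x)-\phi(F_n(i),x)$. Instead the paper works with the eigenvalue equations of $F_n(i)$, derives $x_2^i=x_3^i$ and the product identity $(\rho_i^2-n+i+3)(\rho_i^2-\rho_i-2i-2)=2$ with both factors positive, uses that identity to trap $\rho_i$ between $(1+\sqrt{8i+17})/2$ and $\sqrt{n-i-2}$ (in one order for $i\le\lfloor\alpha\rfloor$, the other for $i\ge\lceil\alpha\rceil$), deduces the sign of $x_6^i-x_5^i$, and finishes with the Rayleigh quotient (Theorem \ref{3.12}). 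Your key computations that are actually stated check out: $p_{i+1}(x)-p_i(x)=-(x+1)(x^2+x-(2n-10-4i))$ is correct, and your quadratic $9\alpha^2+(37-6n)\alpha+(n^2-13n+38)=0$ has discriminant $24n+1$ and smaller root exactly $\alpha$, agreeing with the paper's characterization of $\alpha$ as the solution of $\sqrt{n-i-2}=(1+\sqrt{8i+17})/2$.

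However, as written the argument has two genuine gaps. First, the ``dichotomy'' $T(i)<0\Leftrightarrow\rho_{i+1}<\rho_i$ does not follow from Theorem \ref{3.6}: that theorem needs $\phi(H,x)>\phi(G,x)$ for \emph{all} $x\ge\rho(G)$, and from $T(i)<0$ you only get $p_{i+1}(\rho_i)>0$, which does not by itself place $\rho_{i+1}$ below $\rho_i$. Only the direction $T(i)>0\Rightarrow p_{i+1}(\rho_i)<0\Rightarrow\rho_{i+1}>\rho_i$ is immediate. The fix is the companion identity $p_i(\rho_{i+1})=(\rho_{i+1}+1)\bigl(T(i+1)-4\bigr)$: whenever $T(i+1)<4$ you get $p_i(\rho_{i+1})<0$ and hence $\rho_i>\rho_{i+1}$, and this criterion (combined with the continuous monotonicity of $S(\beta)=\rho_\beta^2+\rho_\beta-(2n-8-4\beta)=T(\beta)-2$, which changes sign only at $\alpha$ since $S'=4$ at any zero) actually closes the descending half without ever needing the bracket $T(\lfloor\alpha\rfloor-1)<0$ that you flag as problematic. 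Second, and relatedly, the two steps you defer---the symbolic elimination producing the quadratic, and the upper bound on $\rho_{\lfloor\alpha\rfloor-1}$---are precisely the nontrivial content; the paper obtains the needed two-sided control on $\rho_i$ for free from the product identity $(\rho_i^2-n+i+3)(\rho_i^2-\rho_i-2i-2)=2$, which is exactly the kind of upper bound on the spectral radius your sketch is missing. So the skeleton is salvageable, but the proof is not complete as it stands.
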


\begin{proof}
Let $\rho_i=\rho(F_n(i))$. Our aim is to prove that $\rho_i<\rho_{i-1}$ if $2\leq i\leq \lfloor \alpha\rfloor$ and $\rho_i<\rho_{i+1}$ if $\lceil \alpha\rceil\leq i\leq \frac{n-6}{2}$.

Let $\bm{x}_i$ be the principal eigenvector of $F_n(i)$ and $x_{j}^{i}$ correspond to vertices in $V_{j}$ for $j=1,2,3,4,5,6$. Then by $\rho_i\bm{x}_i=A(F_n(i))\bm{x}_i$ we have
\begin{align}
\rho_ix_{1}^{i}&=x_{2}^{i}+x_{3}^{i}+x_{4}^{i}, \label{eq3.7}\\
\rho_ix_{2}^{i}&=x_{1}^{i}+x_{3}^{i}+ix_{5}^{i}, \label{eq3.8}\\
\rho_ix_{3}^{i}&=x_{1}^{i}+x_{2}^{i}+ix_{5}^{i}, \label{eq3.9}\\
\rho_ix_{4}^{i}&=x_{1}^{i}+(n-i-4)x_{6}^{i}, \label{eq3.10}\\
\rho_ix_{5}^{i}&=x_{2}^{i}+x_{3}^{i}, \label{eq3.11}\\
\rho_ix_{6}^{i}&=x_{4}^{i}, \label{eq3.12}
\end{align}

By $(\ref{eq3.8})$ and $(\ref{eq3.9})$, we have
\begin{align*}
&\rho_i(x_{2}^{i}-x_{3}^{i})=x_{3}^{i}-x_{2}^{i} \text{, i.e.,}\\
&(\rho_i+1)(x_{2}^{i}-x_{3}^{i})=0,
\end{align*}
which implies that
\begin{align}\label{eq3.13}
x_{2}^{i}=x_{3}^{i}.
\end{align}

Therefore, by (\ref{eq3.7}) and (\ref{eq3.10})-(\ref{eq3.13}), we have
\begin{align*}
x_{5}^{i}=\frac{2x_{2}^{i}}{\rho_i}&=x_{1}^{i}-\frac{x_{4}^{i}}{\rho_i} \\
&=x_{1}^{i}-x_{6}^{i}\\
&=\rho_ix_{4}^{i}-(n-i-4)x_{6}^{i}-x_{6}^{i}\\
&=\rho_i^{2}x_{6}^{i}-(n-i-4)x_{6}^{i}-x_{6}^{i}\\
&=(\rho_i^{2}-n+i+3)x_{6}^{i},
\end{align*}
and from (\ref{eq3.7})-(\ref{eq3.8}) and (\ref{eq3.11})-(\ref{eq3.13}), we have
\begin{align*}
x_{6}^{i}=\frac{x_{4}^{i}}{\rho_i}&=x_{1}^{i}-\frac{2x_{2}^{i}}{\rho_i} \\
&=x_{1}^{i}-x_{5}^{i}\\
&=(\rho_i-1)x_{2}^{i}-ix_{5}^{i}-x_{5}^{i}\\
&=\frac{\rho_i(\rho_i-1)}{2}x_{5}^{i}-ix_{5}^{i}-x_{5}^{i}\\
&=\frac{1}{2}(\rho_i^{2}-\rho_i-2i-2)x_{5}^{i}.
\end{align*}

Hence, we obtain that
\begin{align}\label{eq3.14}
\begin{cases}
(\rho_i^{2}-n+i+3)(\rho_i^{2}-\rho_i-2i-2)=2,\\
\rho_i^{2}-n+i+3>0,\\
\rho_i^{2}-\rho_i-2i-2>0.\\
\end{cases}
\end{align}

Note that, if we let
\begin{align*}
\begin{cases}
\rho_i^{2}-n+i+3=1,\\
\rho_i^{2}-\rho_i-2i-2=2,
\end{cases}
\end{align*}
then we have
\begin{align*}
\begin{cases}
\rho_i=\sqrt{n-i-2},\\
\rho_i=\frac{1+\sqrt{8i+17}}{2}.
\end{cases}
\end{align*}
By calculation, we can find that $i=\alpha=(6n-37-\sqrt{24n+1})/18$ is the only solution of $\sqrt{n-i-2}=(1+\sqrt{8i+17})/2$. Since $i \in \mathbb{N}$, we will complete the proof by classifying the value of $i$.

\textbf{Case 1.} If $2\leq i\leq \lfloor \alpha\rfloor$.

We have $\sqrt{n-i-2} \geq (1+\sqrt{8i+17})/2$. We claim that $(1+\sqrt{8i+17})/2 \leq \rho_{i}\leq \sqrt{n-i-2}$. Suopose that $\rho_{i}<(1+\sqrt{8i+17})/2$. By (\ref{eq3.14}), we have $0<\rho_i^{2}-n+i+3<1$ and $0<\rho_i^{2}-\rho_i-2i-2<2$. Then $(\rho_i^{2}-n+i+3)(\rho_i^{2}-\rho_i-2i-2)<2$, a contradiction. Suopose that $\rho_{i}>\sqrt{n-i-2}$. By (\ref{eq3.14}), we obtain that $\rho_i^{2}-n+i+3>1$ and $\rho_i^{2}-\rho_i-2i-2>2$. Then $(\rho_i^{2}-n+i+3)(\rho_i^{2}-\rho_i-2i-2)>2$, a contradiction.

Thus we have $(1+\sqrt{8i+17})/2 \leq \rho_{i}\leq \sqrt{n-i-2}$. This induces that $\rho_i^{2}-n+i+3\leq 1$ and $\rho_i^{2}-\rho_i-2i-2\geq 2$, which lead to $x_{6}^{i}\geq x_{5}^{i}$. Therefore
\begin{equation}\label{eq3.15}
\begin{split}
&\rho_{i-1}-\rho_{i}\\
\geq & \bm{x_{i}}^{T}A(F_n(i-1))\bm{x_{i}}-\bm{x_{i}}^{T}A(F_n(i))\bm{x_{i}}\\
= &2x_{5}^{i}(x_{4}^{i}-x_{2}^{i}-x_{3}^{i})\\
= &2\rho_{i}x_{5}^{i}(x_{6}^{i}-x_{5}^{i})\\
\geq&0.
\end{split}
\end{equation}

Now we only need to prove $\rho_{i-1}\neq \rho_{i}$. Suppose that $\rho_{i-1}=\rho_{i}$, then $\rho_{i-1}=\bm{x_{i}}^{T}A(F_n(i-1))\bm{x_{i}}$. By Theorem \ref{3.12}, we have
\begin{align*}
\rho_{i-1}x_{4}^{i}=x_{1}^{i}+(n-i-4)x_{6}^{i}+x_{5}^{i},
\end{align*}
and since
\begin{align*}
\rho_{i}x_{4}^{i}=x_{1}^{i}+(n-i-4)x_{6}^{i},
\end{align*}
we obtain $0=(\rho_{i-1} -\rho_{i})x_{4}^{i}=x_{5}^{i}$, which contradicts to the definition of the principal eigenvector.

Therefore, from (\ref{eq3.15}) we have $\rho_{i-1}>\rho_{i}$ for $2\leq i\leq \lfloor \alpha\rfloor$.

\textbf{Case 2.} If $\lceil \alpha\rceil\leq i\leq \frac{n-6}{2}$.

We have $\sqrt{n-i-2} \leq (1+\sqrt{8i+17})/2$. Similarly, by (\ref{eq3.14}), we conclude that $ \sqrt{n-i-2}\leq \rho_{i}\leq (1+\sqrt{8i+17})/2$. This induces that $\rho_i^{2}-n+i+3\geq 1$ and $\rho_i^{2}-\rho_i-2i-2\leq 2$, which lead to $x_{5}^{i}\geq x_{6}^{i}$, therefore
\begin{equation}\label{eq3.16}
\begin{split}
&\rho_{i+1}-\rho_{i}\\
\geq & \bm{x_{i}}^{T}A(F_n(i+1))\bm{x_{i}}-\bm{x_{i}}^{T}A(F_n(i))\bm{x_{i}}\\
=&2x_{6}^{i}(x_{2}^{i}+x_{3}^{i}-x_{4}^{i})\\
= &2\rho_{i}x_{6}^{i}(x_{5}^{i}-x_{6}^{i})\\
\geq&0.
\end{split}
\end{equation}

Similarly, we have $\rho_{i+1}\neq \rho_{i}$. Using this, from (\ref{eq3.16}), we obtain $\rho_{i+1}> \rho_{i}$ for $\lceil \alpha\rceil \leq i\leq \frac{n-6}{2}$.

Therefore, the proof of Lemma is completed.

\end{proof}

\subsection{Step 4}

It only remains for the case that $5\leq n \leq 11$. Applying Proposition \ref{g1}, \ref{g10} and \ref{g7}, we obtain the extremal graphs with minimum spectral radius in $M_{n}(G_{1})$, $M_{n}(G_{7})$ and $M_{n}(G_{10})$, respectively. And then calculate their spectral radii by using MATLAB, as shown in Table \ref{tab3}, where the extremal graphs and the minimum spectral radii are bolded.
	 \begin{table}[H]\Huge
	\centering
	\caption{The extremal graph with minimum spectral radius in $M_{n}(G_{1}),M_{n}(G_{7}),M_{n}(G_{10}).$}
	\label{tab3}
	\resizebox{\textwidth}{!}{
   \begin{tabular}{ccccccc}
	\hline
	\multirow{2}{*}{n} &
	\multicolumn{2}{c}{$M_{n}(G_{1})$} &
	\multicolumn{2}{c}{$M_{n}(G_{7})$} &
	\multicolumn{2}{c}{$M_{n}(G_{10})$} \\
	\cline{2-7}
	& Extremal graph & Spectral radius& Extremal graph & Spectral radius& Extremal graph & Spectral radius\\ \hline
	5 & $G_{1} \circ(1,1,1,1,1)$ & 2.2143 & \bm{$G_{7} \circ(1,1,1,1,1)$} & \textbf{2.0000} & $\backslash$ & $\backslash$ \\
	6 & \bm{$G_{1} \circ(1,1,1,1,2)$} & \textbf{2.2784} & $G_{7} \circ(2,1,1,1,1)$ & 2.3912 & $G_{10} \circ(1,1,1,1,1,1)$ & 2.6544 \\
	7 & \bm{$G_{1} \circ(1,1,1,1,3)$} & \textbf{2.3686} & $G_{7} \circ(2,1,2,1,1)$ & 2.6813 & $G_{10} \circ(1,1,1,1,1,2)$ & 2.6751 \\
	8 & \bm{$G_{1} \circ(1,1,1,1,4)$} & \textbf{2.4860} & $G_{7} \circ(3,1,2,1,1)$ & 2.9764 & $G_{10} \circ(1,1,1,1,1,3)$ & 2.7033 \\
	9 & \bm{$G_{1} \circ(1,1,1,1,5)$} & \textbf{2.6239} & $G_{7} \circ(3,1,3,1,1)$ & 3.2176 & $G_{10} \circ(1,1,1,1,1,4)$ & 2.7448 \\
	10 & \bm{$G_{1} \circ(1,1,1,1,6)$} & \textbf{2.7724} & $G_{7} \circ(4,1,3,1,1)$ & 3.4630 & $G_{10} \circ(1,1,1,1,1,5)$ & 2.8060 \\
	11 & $G_{1} \circ(1,1,1,1,7)$ & 2.9243 & $G_{7} \circ(4,1,4,1,1)$ & 3.6737 & \bm{$G_{10} \circ(1,1,1,1,1,6)$} & \textbf{2.8915} \\ \hline
	\end{tabular}}
\end{table}

By Table \ref{tab4}, we obtain that when $5\leq n \leq 11$, the extremal graph with minimum spectral radius of rank $5$ is:
\begin{itemize}
	\item $G_{7}=C_{5}$, for $n=5$;
	\item $G_{1} \circ(1,1,1,1,n-4)$, for $6\leq n \leq 10$;
	\item $G_{10} \circ(1,1,1,1,1,n-5)$, for $n=11$.
\end{itemize}

\section{Concluding remarks}

In the last case of Theorem \ref{2}, we obtain that $k \in \{ \lfloor \frac{6n-37-\sqrt{24n+1}}{18} \rfloor$ $,\lceil \frac{6n-37-\sqrt{24n+1}}{18} \rceil\}$. When $12\leq n \leq 23$, we use the MATLAB software to calculate the spectral radii of the graphs in $\mathcal{F}=\{F_n(i):1\leq i\leq \frac{n-4}{2}\}$, as shown in the Table \ref{tab4}, where the minimum spectral radius is bolded. It demonstrates that $k=\lfloor \frac{6n-37-\sqrt{24n+1}}{18} \rfloor$ or $\lceil \frac{6n-37-\sqrt{24n+1}}{18} \rceil$ depends on $n$.

\begin{table}[H]
	\centering
	\caption{$\rho(F_{n}(i))$.}
	\label{tab4}
	\resizebox{\textwidth}{!}{
		\begin{tabular}{c|ccccccccc|c}
			\hline
			\diagbox{$n$}{$\rho(F_{n}(i))$}{$i$} & 1 & 2 & 3 & 4 & 5 & 6 & 7 & 8 & 9& $\frac{6n-37-\sqrt{24n+1}}{18}$\\ \hline
			12 & \textbf{3} & 3.1370 & 3.4319& 3.7362 &  \textbackslash{} & \textbackslash{} & \textbackslash{} & \textbackslash{} & \textbackslash{}&1 \\
			13 & \textbf{3.1239} & 3.1818 & 3.4431& 3.7404& \textbackslash{} & \textbackslash{} & \textbackslash{} & \textbackslash{} & \textbackslash{}& 1.2949\\
			14 & 3.255 &\textbf{3.2470}&3.4588&3.7457&4.0278& \textbackslash{} & \textbackslash{} & \textbackslash{} & \textbackslash{} & 1.5912\\
			15 & 3.3894& \textbf{3.3347} & 3.4817 & 3.7525& 4.0308& \textbackslash{} & \textbackslash{} & \textbackslash{} & \textbackslash{} & 1.8889 \\
			16 & 3.5227& \textbf{3.4402}& 3.5160& 3.7616& 4.0344& 4.2979 & \textbackslash{} & \textbackslash{} & \textbackslash{} & 2.1877  \\
			17 & 3.6539 & \textbf{3.5563} & 3.5674 & 3.7743& 4.0389& 4.3001 & \textbackslash{} & \textbackslash{} & \textbackslash{} & 2.4876 \\
			18 &3.7824 & 3.6770& \textbf{3.6394} & 3.7926& 4.0446 &4.3027& 4.5506& \textbackslash{} & \textbackslash{} & 2.7884 \\
			19   & 3.9079& 3.7889& \textbf{3.7303}&3.8199& 4.0523 & 4.3058 &4.5522& \textbackslash{} & \textbackslash{} & 3.0901 \\
			20   & 4.0303 & 3.9201& \textbf{3.8338} & 3.8612& 4.0628 & 4.3097& 4.5542& 4.7888& \textbackslash{} & 3.3927 \\
			21   &4.1498&4.0396& 3.9439&\textbf{3.9211}& 4.0779& 4.3147 & 4.5565 & 4.7900& \textbackslash{} & 3.6960\\
			22   &4.2663  & 4.1570 & 4.0564 & \textbf{4} & 4.1002 &  4.3213& 4.5593 & 4.7915& 5.0146 & 4\\
			23   &4.3801 & 4.2721   & 4.1694 & \textbf{4.0929 } & 4.1341 &  4.3303 & 4.5627  &  4.7933& 5.0157 & 4.3047\\ \hline
	\end{tabular}}
	
\end{table}

	It is a natural problem to determine the extramal spectral radii of the  graphs of order $n$ and rank $r$. By Theorem \ref{1}, we know that the maximum spectral radius of all connected graphs of order $n$ and rank $r$ is $\rho(T(n,r))$. Feng et al. gave the spectral radius of $T(n,r)$ in \cite{13}.
	\begin{theorem}\cite{13}
		Let $T(n,r)$ be a Tur\'{a}n graph. Then
		$$\rho(T(n,r))=\frac{1}{2}\left(n-2\lfloor \frac{n}{r} \rfloor-1+\sqrt{(n+1)^{2}-4(n-r\lfloor \frac{n}{r} \rfloor)\lceil \frac{n}{r} \rceil}\right)\leq n-\lfloor \frac{n}{r} \rfloor$$
		with the last equality if and only if $T(n,r)$ is regular.
	\end{theorem}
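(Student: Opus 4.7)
The plan is to use an equitable partition of $T(n,r)$ into two blocks determined by part sizes, then compute explicitly via the quotient matrix. Write $n = rq + s$ with $q = \lfloor n/r \rfloor$ and $0 \leq s = n - r\lfloor n/r \rfloor < r$; then $T(n,r)$ is the complete $r$-partite graph having exactly $s$ parts of size $q+1 = \lceil n/r \rceil$ and $r - s$ parts of size $q$. Merge all $s$ large parts into one class $X_{1}$ (of total size $P = s(q+1)$) and all $r-s$ small parts into another class $X_{2}$ (of total size $Q = (r-s)q$). A vertex in a large part has $P - (q+1)$ neighbors in $X_{1}$ and $Q$ neighbors in $X_{2}$; a vertex in a small part has $P$ neighbors in $X_{1}$ and $Q - q$ neighbors in $X_{2}$. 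Hence the partition is equitable with quotient matrix
\begin{equation*}
B = \begin{pmatrix} P - (q+1) & Q \\ P & Q - q \end{pmatrix},
\end{equation*}
so by Theorem \ref{3.5}, $\rho(T(n,r)) = \rho(B)$.

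Next I would compute the characteristic polynomial of $B$. Expanding,
\begin{equation*}
\phi(B,x) = x^{2} - (n - 2q - 1)\,x - \bigl(Pq + (q+1)Q - q(q+1)\bigr).
\end{equation*}
Using the identities $P + Q = n$ and $Pq + (q+1)Q = q(q+1)\bigl[s + (r-s)\bigr] = rq(q+1)$, this simplifies to
\begin{equation*}
\phi(B,x) = x^{2} - (n - 2q - 1)\,x - (r-1)\,q(q+1).
\end{equation*}
Taking the larger root gives $\rho(T(n,r)) = \tfrac{1}{2}\bigl(n - 2q - 1 + \sqrt{D}\bigr)$ with $D = (n - 2q - 1)^{2} + 4(r-1)q(q+1)$. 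To obtain the stated form of the discriminant, I would substitute $rq = n - s$ and expand
\begin{equation*}
D = n^{2} - 4nq - 2n + 1 + 4(n - s)(q+1) = (n+1)^{2} - 4s(q+1),
\end{equation*}
which is exactly $(n+1)^{2} - 4\bigl(n - r\lfloor n/r\rfloor\bigr)\lceil n/r\rceil$.

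For the inequality, I would simply evaluate the monic quadratic $\phi(B,x)$ at $x = n - q$:
\begin{equation*}
\phi(B, n - q) = (n-q)(q+1) - (r-1)q(q+1) = (q+1)\bigl(n - rq\bigr) = (q+1)s \geq 0.
\end{equation*}
Since the vertex of the upward parabola $\phi(B,x)$ lies at $(n - 2q - 1)/2 < n - q$, the value $n - q$ sits to the right of the vertex, so $\phi(B, n - q) \geq 0$ forces $\rho(B) \leq n - q$. Equality holds iff $s = 0$, i.e., iff $r \mid n$, which is exactly the condition that every part has size $n/r$ and $T(n,r)$ is $(n - n/r)$-regular.

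There is essentially no conceptual obstacle: once the two-block equitable partition is set up, everything reduces to checking three short algebraic identities ($P + Q = n$, $Pq + (q+1)Q = rq(q+1)$, and the discriminant rewrite). The only point requiring care is verifying that the evaluation $\phi(B, n-q) \geq 0$ really implies $\rho(B) \leq n - q$ (not $\geq$), which is why one must note that $n - q$ exceeds the axis of symmetry of the parabola.
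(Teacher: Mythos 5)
The paper does not prove this statement at all: it is quoted verbatim from reference \cite{13} in the concluding remarks, so there is no in-paper argument to compare against. Judged on its own, your derivation is correct and fits naturally with the toolkit the paper already uses (equitable partitions and Theorem \ref{3.5}). I checked the key computations: with $P=s(q+1)$, $Q=(r-s)q$ one indeed has $P+Q=n$, $\operatorname{tr}B=n-2q-1$, $Pq+(q+1)Q=rq(q+1)$, hence $\phi(B,x)=x^{2}-(n-2q-1)x-(r-1)q(q+1)$; the discriminant rewrite
\begin{equation*}
(n-2q-1)^{2}+4(r-1)q(q+1)=(n+1)^{2}-4s(q+1)
\end{equation*}
is valid, and since $4s(q+1)=4\bigl(n-r\lfloor n/r\rfloor\bigr)\lceil n/r\rceil$ whether $s=0$ or $s>0$, the stated closed form follows. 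The evaluation $\phi(B,n-q)=(q+1)s\geq 0$ together with the observation that $n-q$ lies to the right of the axis of symmetry correctly yields $\rho\leq n-q$ with equality iff $s=0$, i.e.\ iff $T(n,r)$ is regular; your care on the direction of that implication is exactly right.

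The one point you should patch is the degenerate case $s=0$ (equivalently $r\mid n$): there your class $X_{1}$ is empty, so the two-cell partition is not a legitimate equitable partition and Theorem \ref{3.5} does not apply as written. This costs nothing, since in that case $T(n,r)$ is $(n-q)$-regular, so $\rho=n-q$ directly, and the displayed formula reduces to $\tfrac{1}{2}(n-2q-1+(n+1))=n-q$; but the case split should be stated. (Similarly, for $r=1$ the graph is edgeless and the claim is trivial.) With that sentence added, the proof is complete.
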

	
	Further, we obtain a sharp upper and lower bound for the spectral radius of the extremal graph $G$ which attains the minimum spectral radius among all connected graphs of order $n\geq 12$ and rank $5$. By Theorem \ref{2}, we know that
	\begin{align*}
	\rho(G)= \ \text{min} \ \{\rho(F_n(\lfloor \alpha\rfloor)), \rho(F_n(\lceil \alpha\rceil)) \},
	\end{align*}
	where $\alpha=\frac{6n-37-\sqrt{24n+1}}{18}$.

	From the proof of Lemma \ref{3.13}, we have
	\begin{align*}
	\frac{1+\sqrt{8\lfloor \alpha\rfloor +17}}{2} \leq \rho(F_n(\lfloor \alpha\rfloor) \leq \sqrt{n-\lfloor \alpha\rfloor-2},\\
	\sqrt{n-\lceil \alpha\rceil-2} \leq \rho(F_n(\lceil \alpha\rceil) \leq \frac{1+\sqrt{8\lceil \alpha \rceil +17}}{2}.
	\end{align*}
	Therefore, we obtain that
	\begin{align*}
	 \rho(G) \geq \text{min} \{ \frac{1+\sqrt{8\lfloor \alpha\rfloor +17}}{2}, \sqrt{n-\lceil \alpha\rceil-2}\},
	\end{align*}
	and
	\begin{align*}
	\rho(G) \leq \text{min} \{\sqrt{n-\lfloor \alpha\rfloor-2},  \frac{1+\sqrt{8\lceil \alpha \rceil +17}}{2}\}.
	\end{align*}
In general, the problem of determining the minimum spectral radius of all connected graphs with order $n$ and rank $r$ deserves further study.

\section*{Declaration of compting interest}

There is no competing interest.

\section*{Acknowledgement}

This research is supported by the National Natural Science Foundation of China [Grant number, 12171402].

\end{document}